\documentclass{my_elsarticle}


\usepackage{hyperref}
\usepackage{amsmath}
\usepackage{amssymb}
\usepackage{amsthm}
\usepackage{graphicx}
\usepackage{newlfont}
\usepackage{setspace}
\usepackage{subfig}
\usepackage{a4wide}


\newtheorem{theorem}{Theorem}[section]

\newtheorem{definition}{Definition}


\journal{To appear in \emph{Mathematical Biosciences}.
Submitted 8-Nov-2012; revised 21-May-2013; accepted 16-Oct-2013.}


\begin{document}

\begin{frontmatter}

\title{Vaccination models and optimal control strategies to dengue}

\author[label1,label2,label3]{Helena Sofia Rodrigues}
\ead{sofiarodrigues@esce.ipvc.pt}
\author[label2,label4]{M. Teresa T. Monteiro}
\ead{tm@dps.uminho.pt}
\author[label1,label5]{Delfim F. M. Torres\corref{corA}}
\ead{delfim@ua.pt}

\address[label1]{CIDMA --- Center for Research and Development in Mathematics and Applications}
\address[label2]{Center Algoritmi}
\address[label3]{School of Business Studies, Viana do Castelo Polytechnic Institute, Portugal}
\address[label4]{Department of Production and Systems, University of Minho, Portugal}
\address[label5]{Department of Mathematics, University of Aveiro, Portugal}

\cortext[corA]{Corresponding author}


\begin{abstract}
As the development of a dengue vaccine is ongoing, we
simulate an hypothetical vaccine as an extra protection
to the population. In a first phase, the vaccination process
is studied as a new compartment in the model, and
different ways of distributing the vaccines investigated:
pediatric and random mass vaccines, with distinct levels
of efficacy and durability. In a second step, the vaccination is
seen as a control variable in the epidemiological process.
In both cases, epidemic and endemic scenarios are included
in order to analyze distinct outbreak realities.
\end{abstract}

\begin{keyword}
dengue; vaccine; SVIR model; optimal control.

\emph{2010 Mathematics Subject Classification:} 92B05 \sep 65L05 \sep  49J15.
\end{keyword}

\end{frontmatter}


\section{Introduction}
\label{sec:7:0}

Since 1760, when the Swiss mathematician Daniel Bernoulli published
a study on the impact of immunization with cowpox, the process of protecting
individuals from infection by immunization has become a routine, with historical
success in reducing both mortality and morbidity \cite{Scherer2002}.
The impact of vaccination may be regarded not only as an individual
protective measure, but also as a collective one. While direct
individual protection is the major focus of a mass vaccination program,
the effects on population also contribute indirectly to other individual
protection through herd immunity, providing protection for unprotected
individuals \cite{Farrington2003}.
This means that when we have a large neighborhood of vaccinated people,
a susceptible individual has a lower probability in coming into contact with the infection,
being more difficult for diseases to spread, which decreases the relief of health facilities
and can break the chain of infection.

Dengue is a vector-borne disease that transcends international borders. It is transmitted
to humans through mosquito bite, mainly the \emph{Aedes aegypti}. In this process the
female mosquito acquires the virus while feeding on the blood of an infected person.
The blood is necessary to feed their eggs. Larvae hatch when water inundates the
eggs as a result of rains or an addition of water by people. When the larva has acquired
enough energy and size, metamorphosis is done, changing the larva into pupa. The newly formed
adult emerges from the water after breaking the pupal skin. This process could lasts
between 8 to 10 days \cite{Otero2008}.
Vector control remains the only available strategy against dengue.
Despite integrated vector control with community participation,
along with active disease surveillance and insecticides,
there are only a few examples of successful dengue prevention
and control on a national scale \cite{Cattand2006}.
Besides, the levels of resistance of \emph{Aedes aegypti} to
insecticides has increased, which implies shorter intervals between
treatments, and only few insecticide products are available in the
market due to the high costs for development and registration
and low returns \cite{Keeling2008}.

Dengue vaccines have been under development since the 1940s,
but due to the limited appreciation of global disease burden
and the potential markets for dengue vaccines, industry interest
languished throughout the 20th century. However, in recent years,
the development of dengue vaccines has dramatically accelerated
with the increase in dengue infections, as well as
the prevalence of all four circulating serotypes. Faster development
of a vaccine became a serious concern \cite{Murrell2011}. Economic analysis
are conducted to guide public support for vaccine development
in both industrialized and developing countries, including a previous
cost-effectiveness study of dengue \cite{Clark2005,Shepard2004,Shepard2009}.
The authors of these works compared the cost of the disease burden
with the possibility of making a vaccination campaign;
they suggest that there is a potential economic benefit associated with promising
dengue interventions, such as dengue vaccines and vector control innovations,
when compared to the cost associated to the disease treatments.
Constructing a successful vaccine for dengue has been challenging:
the knowledge of disease pathogenesis is insufficient and in addition
the vaccine must protect simultaneously against all serotypes in order
to not increase the level of dengue haemorrhagic fever \cite{Supriatna2008}.

Currently, the features of a dengue vaccine are mostly unknown.
Therefore, in this paper we opt to present a set of simulations
with different efficacy and different ways of distributing the vaccine.
We have also explored the vaccination process
under two different perspectives. In Section~\ref{sec:7:2} a new compartment
in the model is used and several kinds of vaccines are considered.
In Section~\ref{sec:7:3}, a  second perspective is studied using
the vaccination process as a disease control in the mathematical formulation.
In that case the theory of optimal control is applied.
Both methods assume a continuous vaccination strategy.


\section{Vaccine as a new compartment in the model}
\label{sec:7:2}

The interaction human-mosquito is detailed
in a previous work by the authors \cite{Sofia2012b}. See also \cite{Dumont2010}.
The notation used in our mathematical model includes
four epidemiological states for humans:
\begin{description}
\item[$S_h(t)$ ---] susceptible (individuals who can contract the disease);
\item[$V_h(t)$ ---] vaccinated (individuals who were vaccinated and are now immune);
\item[$I_h(t)$ ---] infected (individuals who are capable of transmitting the disease);
\item[$R_h(t)$ ---] resistant (individuals who have acquired immunity).
\end{description}
It is assumed that the total human population $(N_h)$ is constant,
so, $N_h=S_h+V_h+I_h+R_h$. The compartment $V_h$ represents
the group of human population that is vaccinated, in order to distinguish
the resistance obtained through vaccination and the one achieved by disease recovery.
There are also three other state variables, related to the mosquitoes:
\begin{description}
\item[$A_m(t)$ ---] aquatic phase (includes the eggs, larva and pupa stages);
\item[$S_m(t)$ ---] susceptible (mosquitoes able to contract the disease);
\item[$I_m(t)$ ---] infected (mosquitoes capable of transmitting the disease to humans).
\end{description}
Similarly to the human population, it is assumed
that the total adult mosquito population is constant,
which means $N_m=S_m+I_m$. There is no resistant phase in mosquitoes due to
its short lifespan and the fact that the coefficient of disease transmission
is considered fixed. Another assumption is the susceptibility of the humans and
mosquitoes when they born. The parameters of the model are:
\begin{description}
\item[$N_h$ ---] total population;
\item[$B$ ---] average number of bites on humans by mosquitoes, per day;
\item[$\beta_{mh}$ ---] transmission probability from $I_m$ (per bite);
\item[$\beta_{hm}$ ---] transmission probability from $I_h$ (per bite);
\item[$1/\mu_{h}$ ---] average lifespan of humans (in days);
\item[$1/\eta_{h}$ ---] mean viremic period (in days);
\item[$1/\mu_{m}$ ---] average lifespan of adult mosquitoes (in days);
\item[$\varphi$ ---] number of eggs at each deposit per capita (per day);
\item[$1/\mu_{A}$ ---] natural mortality of larvae (per day);
\item[$\eta_{A}$ ---] maturation rate from larvae to adult (per day);
\item[$m$ ---] female mosquitoes per human;
\item[$k$ ---] number of larvae per human.
\end{description}
Two forms of random vaccination are possible.
The most common to reduce the prevalence of an endemic disease
is pediatric vaccination; the alternative being random vaccination
of the entire population in an outbreak.
In both cases, the vaccination can be considered perfect, conferring 100\% protection
along all life, or imperfect. This last case can be due to the difficulty
of producing an effective vaccine, the heterogeneity of the population
or even the life span of the vaccine.


\subsection{Perfect pediatric vaccine}
\label{sec:7:2:1}

For many potentially human infections, such as measles, mumps, rubella,
whooping cough, polio, there has been much focus on vaccinating newborns
or very young infants. Dengue can be a serious candidate for this type of vaccination.
In the $SVIR$ model, a continuous vaccination strategy is considered,
where a proportion of the newborn $p$ (where $0\leq p \leq 1$),
was by default vaccinated. This model also assumes that the permanent
immunity acquired through vaccination is the same as the natural immunity obtained
from infected individuals eliminating the disease naturally.
The population remains constant, \emph{i.e.}, $N_h=S_h+V_h+I_h+R_h$.
The model is represented in Figure~\ref{cap7_modeloSVIR_newborn}.
\begin{figure}[ptbh]
\begin{center}
\includegraphics[scale=0.4]{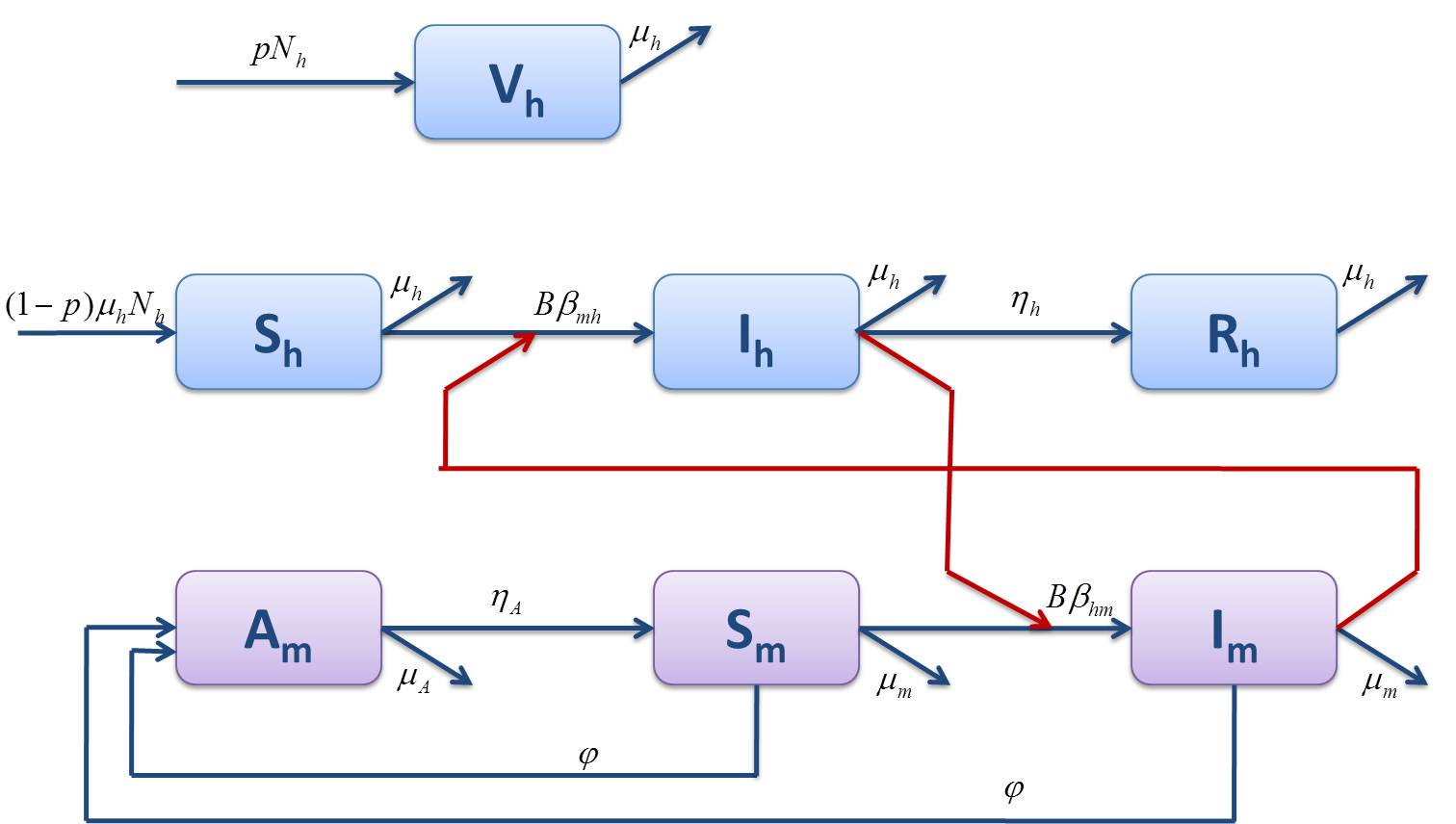}
\end{center}
\caption{Epidemic model using
a pediatric vaccine. \label{cap7_modeloSVIR_newborn}}
\end{figure}
The mathematical formulation is:
\begin{equation}
\label{cap7_ode_vaccination_newborn}
\begin{cases}
\frac{dS_h}{dt}(t) = (1-p)\mu_h N_h - \left(B\beta_{mh}\frac{I_m(t)}{N_h}+\mu_h\right)S_h(t)\\
\frac{dV_h}{dt}(t) = p\mu_h N_h-\mu_h V_h(t)\\
\frac{dI_h}{dt}(t) = B\beta_{mh}\frac{I_m(t)}{N_h}S_h(t) -(\eta_h+\mu_h) I_h(t)\\
\frac{dR_h}{dt}(t) = \eta_h I_h(t) - \mu_h R_h(t)\\
\frac{dA_m}{dt}(t) = \varphi \left(1-\frac{A_m(t)}{ k N_h}\right)(S_m(t)+I_m(t))
-\left(\eta_A+\mu_A \right) A_m(t)\\
\frac{dS_m}{dt}(t) = \eta_A A_m(t)
-\left(B \beta_{hm}\frac{I_h(t)}{N_h}+\mu_m \right) S_m(t)\\
\frac{dI_m}{dt}(t) = B \beta_{hm}\frac{I_h(t)}{N_h}S_m(t)
-\mu_m I_m(t).
\end{cases}
\end{equation}
We are assuming that the vaccine is perfect,
which means that it confers life-long protection.
The nontrivial disease-free equilibrium for system
\eqref{cap7_ode_vaccination_newborn} is given by
\begin{gather*}
S_h = (1-p) N_h,
\quad V_h = p N_h,
\quad I_h = 0,
\quad R_h = 0,\\
A_m = \left(1 - \frac{\eta_A + \mu_A}{\varphi \, \eta_A} \mu_m \right) k N_h,
\quad S_m = \frac{\eta_A}{\mu_m} A_m,
\quad I_m = 0.
\end{gather*}

As a first step, we determine the basic reproduction number without vaccination ($p=0$).

\begin{theorem}
\label{chap7_thm:r0}
Without vaccination, the basic reproduction number $\mathcal{R}_0$,
associated to the differential system \eqref{cap7_ode_vaccination_newborn},
is given by
\begin{equation}
\label{cap7:eq:R0}
\mathcal{R}_0 =  \sqrt{ \frac{k B^2 \beta_{hm} \beta_{mh}\left(
-\eta_A \mu_m-\mu_A \mu_m+\varphi \eta_A\right)}{\varphi (\eta_h + \mu_h)  \mu_m^2}}.
\end{equation}
\end{theorem}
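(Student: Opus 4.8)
The plan is to compute $\mathcal{R}_0$ via the next-generation matrix method of van den Driessche and Watmough. First I would set $p=0$ in system \eqref{cap7_ode_vaccination_newborn} and identify the \emph{infected} compartments, namely $I_h$ and $I_m$ (the aquatic stage $A_m$ is not an infected class, and $S_h$, $V_h$, $R_h$, $S_m$ are uninfected). Writing the dynamics of $(I_h, I_m)$ as $\dot x = \mathcal{F}(x) - \mathcal{V}(x)$, I would take $\mathcal{F}$ to be the vector of \emph{new infection} terms,
\begin{equation*}
\mathcal{F} = \begin{pmatrix} B\beta_{mh}\frac{I_m}{N_h}S_h \\[4pt] B\beta_{hm}\frac{I_h}{N_h}S_m \end{pmatrix},
\qquad
\mathcal{V} = \begin{pmatrix} (\eta_h+\mu_h) I_h \\[4pt] \mu_m I_m \end{pmatrix},
\end{equation*}
and then linearise both at the disease-free equilibrium listed just before the theorem (with $p=0$): $S_h = N_h$, $S_m = \frac{\eta_A}{\mu_m}A_m$, $A_m = \bigl(1 - \frac{\eta_A+\mu_A}{\varphi\,\eta_A}\mu_m\bigr)kN_h$.

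Next I would form the matrices $F = D\mathcal{F}$ and $V = D\mathcal{V}$ evaluated at that equilibrium. This gives
\begin{equation*}
F = \begin{pmatrix} 0 & B\beta_{mh} \\[2pt] \dfrac{B\beta_{hm} S_m^{*}}{N_h} & 0 \end{pmatrix},
\qquad
V = \begin{pmatrix} \eta_h+\mu_h & 0 \\[2pt] 0 & \mu_m \end{pmatrix},
\end{equation*}
where $S_m^{*}$ denotes the equilibrium value of $S_m$. Then $\mathcal{R}_0$ is the spectral radius of $FV^{-1}$. Since $FV^{-1}$ is a $2\times 2$ matrix with zero diagonal, its eigenvalues are $\pm\sqrt{(FV^{-1})_{12}(FV^{-1})_{21}}$, so $\mathcal{R}_0 = \sqrt{\dfrac{B^2\beta_{mh}\beta_{hm} S_m^{*}}{N_h(\eta_h+\mu_h)\mu_m}}$.

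The remaining work is purely algebraic: substitute $S_m^{*} = \frac{\eta_A}{\mu_m}A_m^{*}$ and $A_m^{*} = \bigl(1 - \frac{(\eta_A+\mu_A)\mu_m}{\varphi\,\eta_A}\bigr)kN_h$, so that $\frac{S_m^{*}}{N_h} = \frac{\eta_A k}{\mu_m}\bigl(1 - \frac{(\eta_A+\mu_A)\mu_m}{\varphi\,\eta_A}\bigr) = \frac{k(\varphi\,\eta_A - \eta_A\mu_m - \mu_A\mu_m)}{\varphi\,\mu_m}$. Plugging this in and collecting terms gives exactly
\begin{equation*}
\mathcal{R}_0 = \sqrt{\frac{kB^2\beta_{hm}\beta_{mh}(\varphi\,\eta_A - \eta_A\mu_m - \mu_A\mu_m)}{\varphi(\eta_h+\mu_h)\mu_m^2}},
\end{equation*}
which matches \eqref{cap7:eq:R0}. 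I would also briefly check the hypotheses of the next-generation theorem (that the disease-free equilibrium is locally asymptotically stable for the subsystem of uninfected compartments, and that $\mathcal{F}$, $\mathcal{V}$ satisfy the required sign/monotonicity conditions), but these are routine. The only real subtlety — and the step most worth stating carefully — is the \emph{bookkeeping choice} of which transition terms count as "new infections" ($\mathcal{F}$) versus transfers ($\mathcal{V}$): here the aquatic-to-adult maturation $\eta_A A_m$ and mosquito mortality belong in $\mathcal{V}$ (or outside the infected subsystem entirely), and only the two mass-action cross-infection terms go into $\mathcal{F}$; a different split would change the matrix but not the spectral radius, and getting this right is what makes the final formula come out in the stated form.
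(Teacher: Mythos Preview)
Your computation is correct and follows the standard next-generation matrix approach of van den Driessche and Watmough; the algebra substituting $S_m^{*}/N_h$ is accurate and yields exactly \eqref{cap7:eq:R0}. The paper itself does not give a detailed argument but simply refers to \cite{Sofia2012}, where the same next-generation construction is carried out, so your proposal is in line with (indeed more explicit than) the paper's own treatment.
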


\begin{proof}
The proof is similar to the one presented in \cite{Sofia2012}.
\end{proof}

We do all the simulations in two scenarios:
an epidemic and an endemic situation.
The following values for the parameters of the differential system
and initial conditions were used (Tables~\ref{chap7_parameters}
and \ref{chap7_initial_conditions}). These values are based on
previous works \cite{Sofia2012b,Sofia2012}.
A small number of initial infected individuals ($I_{h0} = 10$)
is considered, to simulate an early action by the health authorities. We recall
that dengue is endemic when it occurs several times in a year, and is not related with
the initial value of infected individuals. Moreover, a small initial value of infected individuals
in an endemic scenario is in agreement with \cite{Medeiros}.
In our work the same initial value of $I_h$ for both epidemic and endemic scenarios
is important in order to compare the development of the disease.
\begin{table}[ptbh]
\begin{center}
\begin{tabular}{lcc}
\hline
Parameter & Epidemic scenario & Endemic scenario\\
\hline
$N_h$ & 480000 & 480000\\
$B$ & 0.8 & 0.75\\
$\beta_{mh}$ & 0.375 & 0.21\\
$\beta_{hm}$ & 0.375 & 0.21\\
$\mu_{h}$ & $\frac{1}{71\times365}$ & $\frac{1}{71\times365}$\\
$\eta_{h}$ & $\frac{1}{3}$& $\frac{1}{3}$\\
$\mu_{m}$ & $\frac{1}{10}$ & $\frac{1}{10}$\\
$\varphi$ & 6 & 6\\
$\mu_{A}$ & $\frac{1}{4}$& $\frac{1}{4}$\\
$\eta_{A}$ & 0.08 & 0.08\\
$m$ & 3 & 3\\
$k$ & 3 & 3\\
\hline
\end{tabular}
\caption{Parameters values of the differential system \eqref{cap7_ode_vaccination_newborn}.}
\label{chap7_parameters}
\end{center}
\end{table}
\begin{table}[ptbh]
\begin{center}
\begin{tabular}{lcc}
\hline
Initial conditions & Epidemic scenario & Endemic scenario\\
\hline
$S_{h0}$ & 479990 & 379990\\
$V_{h0}$ & 0 & 0 \\
$I_{h0}$ & 10 & 10 \\
$R_{h0}$ & 0 & 100000\\
$A_{m0}$ & 1440000 & 1440000\\
$S_{m0}$ & 1440000 & 1440000\\
$I_{m0}$ & 0 & 0\\
\hline
\end{tabular}
\caption{Initial conditions of the differential system \eqref{cap7_ode_vaccination_newborn}.}
\label{chap7_initial_conditions}
\end{center}
\end{table}

Two main differences between an epidemic episode and an endemic situation were found.
Firstly, in the endemic situation there was a slight decrease in the average daily
biting $B$ and transmission probabilities
$\beta_{mh}$ and $\beta_{hm}$, which can be explained by the fact that the mosquito
may have more difficulties to find a naive individual.
The second difference is concerned with the strong increase of the initial human
population that is resistant to the disease. This can be explained by the fact
that the disease, in an endemic situation, already creates an immune resistance
to the infection, \emph{i.e.}, the population already has herd immunity.
With our values we obtain approximately $\mathcal{R}_{0}=2.46$ and $\mathcal{R}_{0}=1.29$
for epidemic and endemic scenarios, respectively.

During an outbreak, the disease transmission assumes different behaviors,
according to the distinct scenarios, as can been seen in Figure~\ref{cap7_population_no_vaccine}.
In one year, the peak in an epidemic situation could reach more than 80000 cases.
In contrast, in the endemic situation the curve of infected individuals has a more smooth behavior
and reaches a peak less than 3000 cases. Figure~\ref{cap7_population_no_vaccine_mosquito}
relates to the mosquito population. In the endemic scenario, because a substantial part
of the human population is resistant to the disease, the infected mosquitoes bite
a considerable percentage of resistant host and, as consequence,
the disease is not transmitted.
\begin{figure}
\centering
\subfloat[Epidemic scenario]{\label{cap7_epid_no_vaccine}
\includegraphics[scale=0.52]{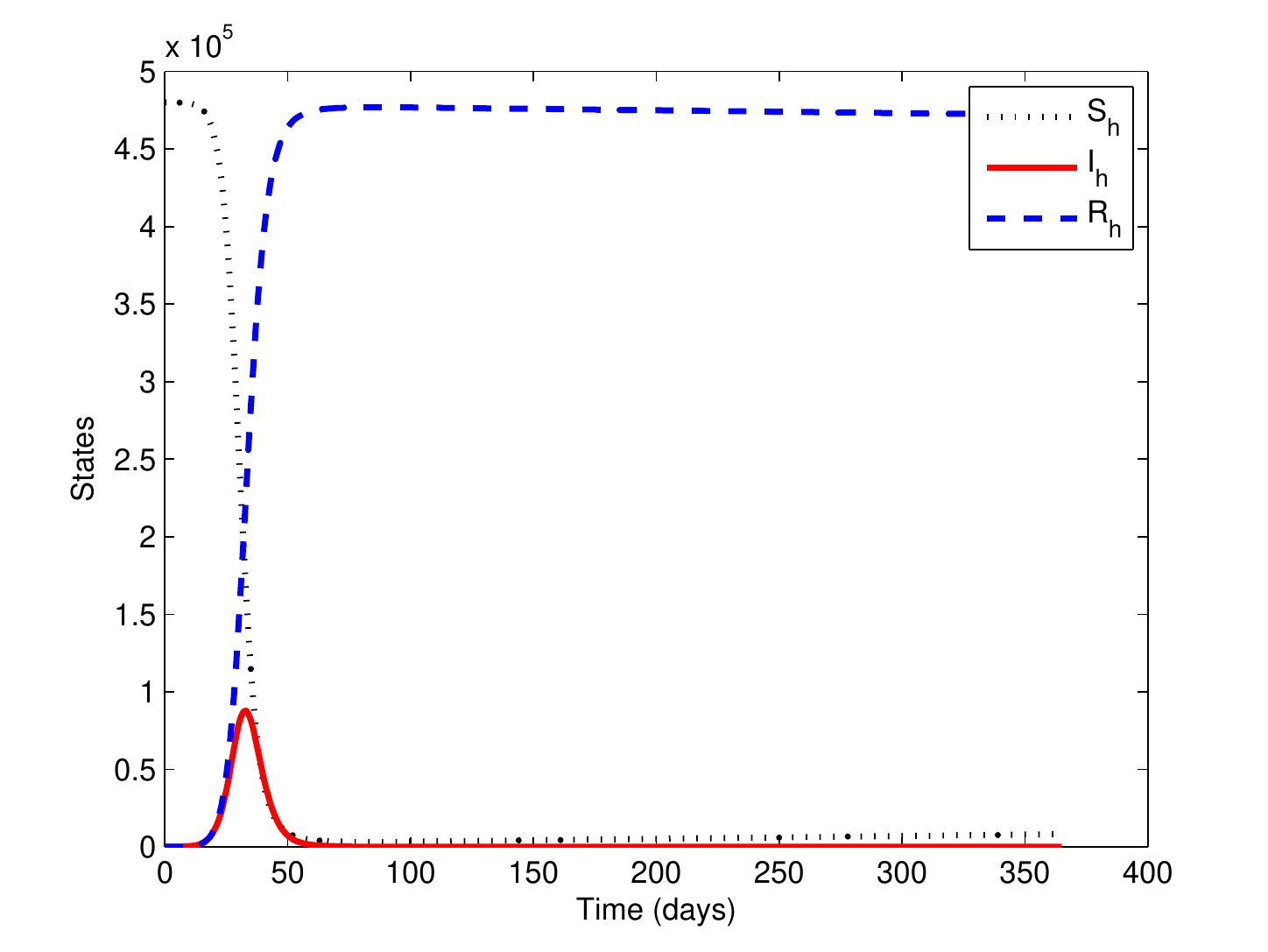}}
\subfloat[Endemic scenario]{\label{cap7_end_no_vaccine}
\includegraphics[scale=0.52]{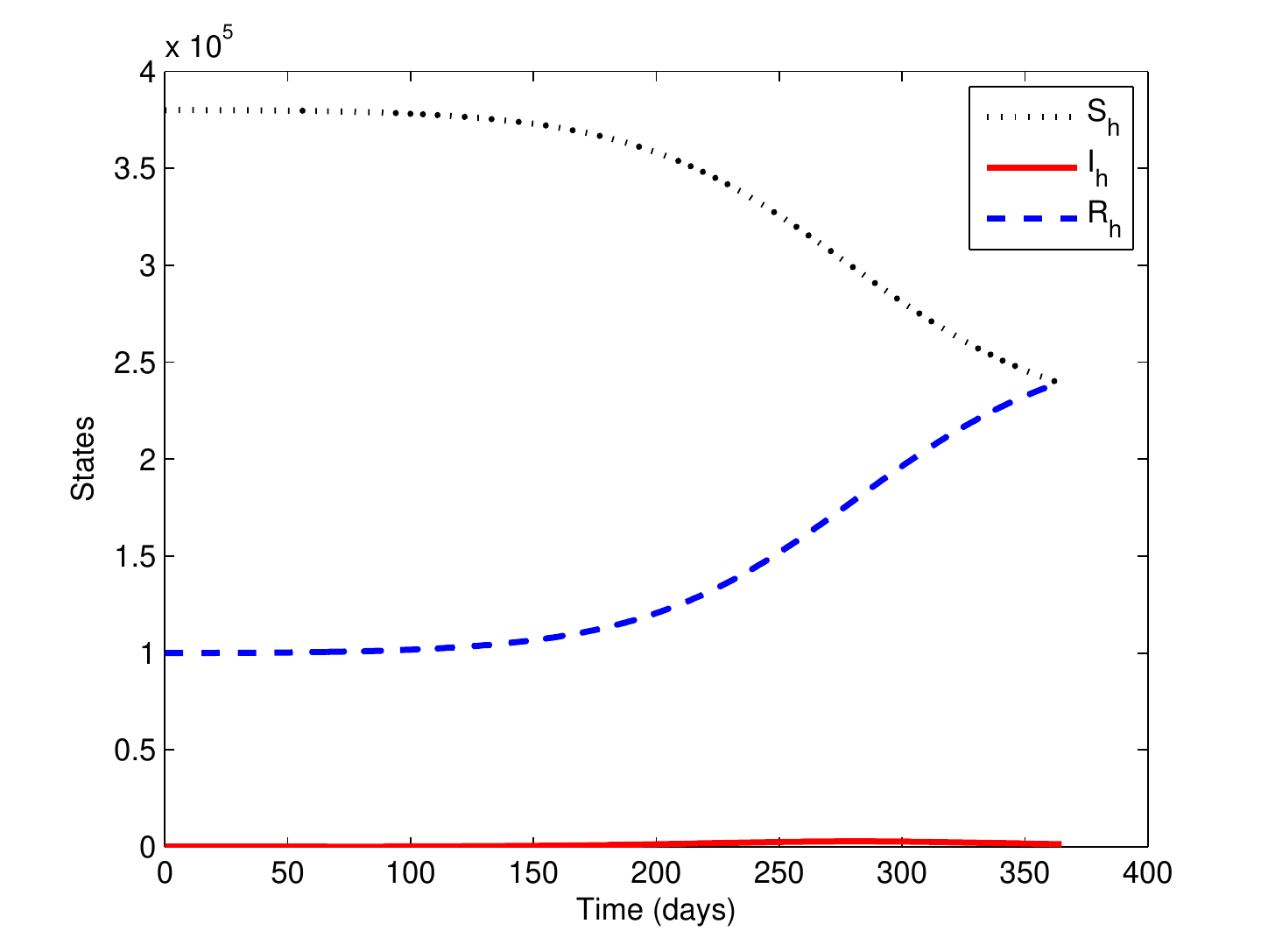}}
\caption{Human population in a dengue outbreak, without vaccine.}
\label{cap7_population_no_vaccine}
\end{figure}
\begin{figure}
\centering
\subfloat[Epidemic scenario]{\label{cap7_epid_no_vaccine_mosquito}
\includegraphics[scale=0.52]{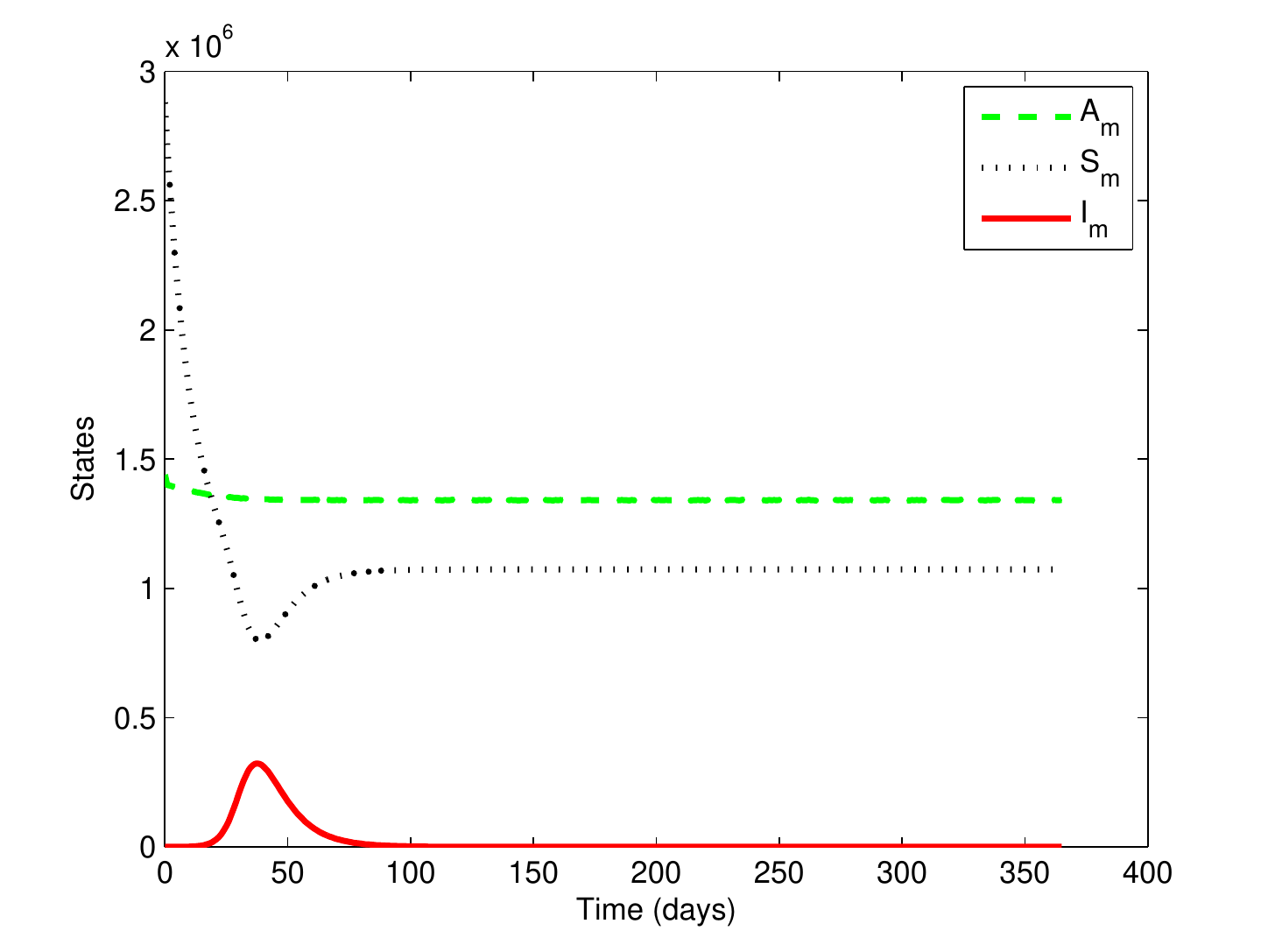}}
\subfloat[Endemic scenario]{\label{cap7_end_no_vaccine_mosquito}
\includegraphics[scale=0.52]{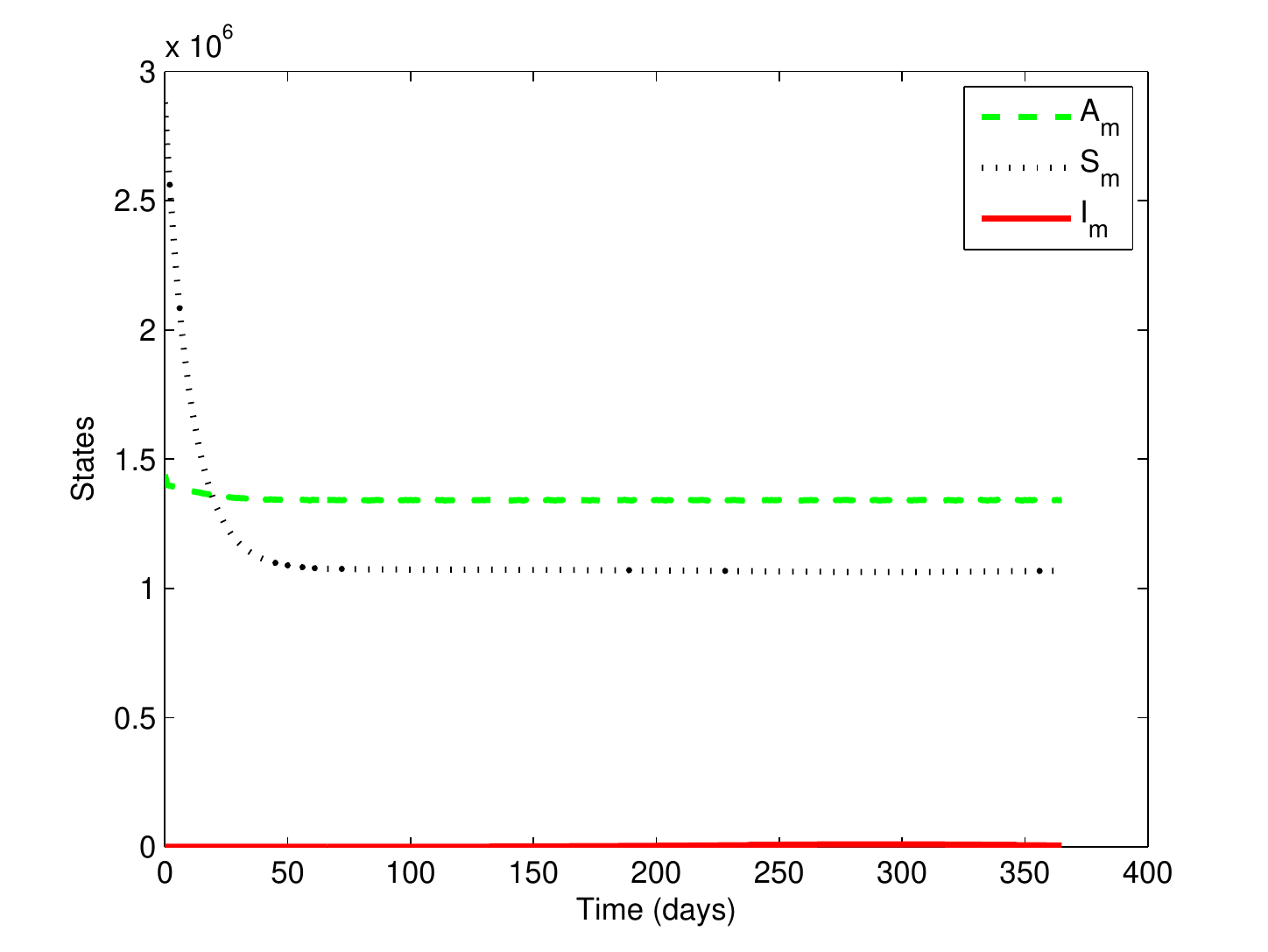}}
\caption{Mosquito population in a dengue outbreak, without vaccine.}
\label{cap7_population_no_vaccine_mosquito}
\end{figure}

Suppose that at time $t=0$ a proportion $p$ of newborns is vaccinated
with a perfect vaccine without side effects. Since this proportion,
$p$, is now immune, $\mathcal{R}_{0}$ is reduced,
creating a new basic reproduction number $\mathcal{R}_0^{p}$.

\begin{definition}[\protect{cf. \cite{Scherer2002}}]
\label{chap7_thm:r0:newborn}
The basic reproduction number with pediatric vaccination,
$\mathcal{R}_0^{p}$, associated to the differential system
\eqref{cap7_ode_vaccination_newborn}, is given by
$\mathcal{R}_0^{p} = (1-p)\mathcal{R}_{0}$,
where $\mathcal{R}_{0}$ is defined by \eqref{cap7:eq:R0}.
\end{definition}

Observe that $\mathcal{R}_0^{p}\leq\mathcal{R}_0$.
Equality is only achieved when $p=0$,
\emph{i.e.}, when there is no vaccination.
Note that the constraint $\mathcal{R}_0^{p}<1$ defines implicitly
a critical vaccination portion $p > p_{c} := 1-\frac{1}{\mathcal{R}_{0}}$
that must be achieved for eradication.
Since vaccination entails costs, to choose the smallest coverage
that achieves eradication is the best option. This way, the entire
population does not need to be vaccinated in order to eradicate the disease
(this is the herd immunity phenomenon).
Vaccinating at the critical level $p_{c}$ does not instantly lead
to disease eradication. The immunity level within the population
requires time to build up and at the critical level it may take
a few generations before the required herd immunity is achieved.
Thus, from a public health perspective, $p_{c}$ acts as a lower bound
on what should be achieved, with higher levels of vaccination
leading to a more rapid elimination of the disease.
Figure~\ref{cap7_p_variation} shows simulations
with different proportions of the newborns vaccinated,
in both epidemic and endemic scenarios. Note that at time $t=0$ no person was vaccinated.
In the epidemic situation, as the outbreak reaches a peak at the beginning of the year,
the proportion of newborns vaccinated at that time is minimum
and cannot influence the curve of infected individuals,
giving the optical illusion of a single curve. On the other hand,
in the endemic case, as the outbreak occurs later, the vaccination campaign
starts to produce effects, decreasing the total number of sick humans. This last
graphic illustrates that a vaccination campaign centered in newborns
is a bet for the future of a country, but does not produce instantly results
to fight the disease. To achieve immediate results, it is necessary
to use random mass vaccination, which means that it is necessary
to vaccine a significant part of the population.
\begin{figure}
\centering
\subfloat[Epidemic scenario]{\label{cap7_epid_p_variation}
\includegraphics[scale=0.53]{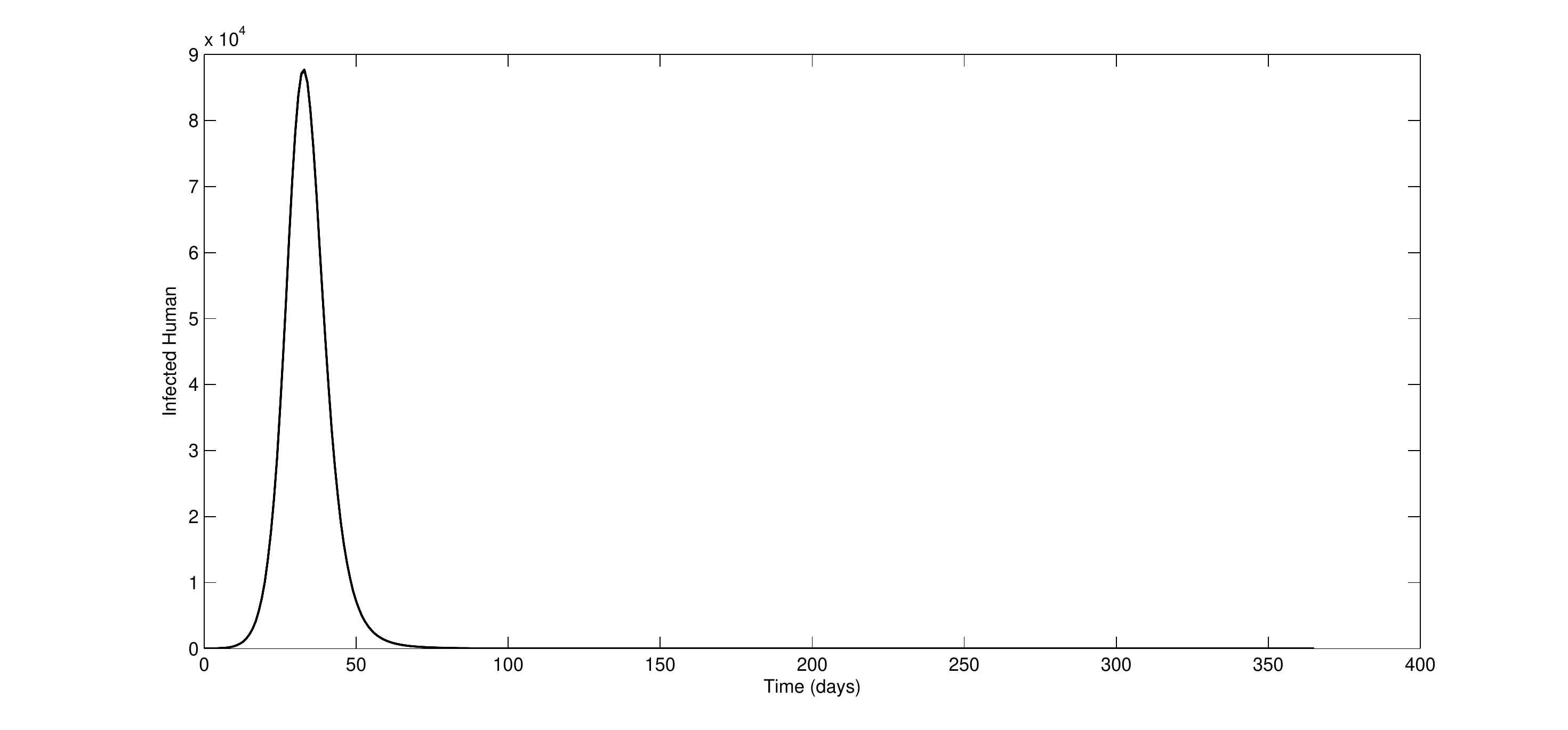}}\\
\subfloat[Endemic scenario]{\label{cap7_end_p_variation}
\includegraphics[scale=0.53]{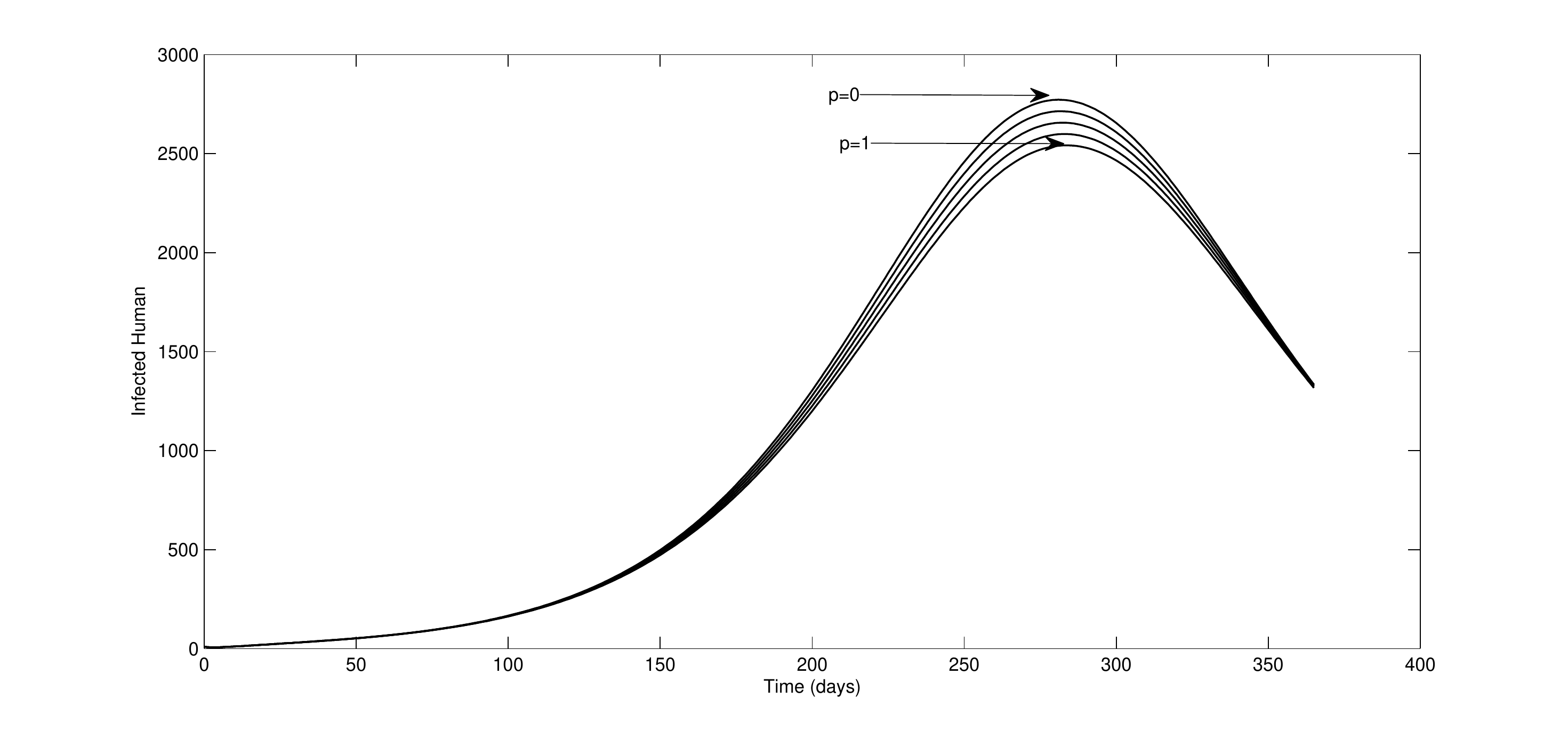}}
\caption{Infected human in an outbreak, varying the proportion
of newborns vaccinated ($p=0, 0.25, 0.50, 0.75, 1$).}
\label{cap7_p_variation}
\end{figure}


\subsection{Perfect random mass vaccination}
\label{sec:7:2:2}

A mass vaccination program may be initiated whenever there is an increase of
the risk of an epidemic. In such situations, there is a competition between
the exponential increase of the epidemic and the logistical constraints
upon mass vaccination. For most human diseases it is possible, and more efficient,
to not vaccinate those individuals who have recovered from the disease, because
they are already protected. Another situation could be the introduction
of a new vaccine in a population that lives an endemic situation.
Let us consider the control technique of constant vaccination of susceptibles.
In this scheme a fraction $0 \leq \psi \leq 1$ of the entire susceptible population,
not just newborns, is being continuously vaccinated. It is assumed that the permanent
immunity acquired by vaccination is the same as natural immunity obtained
from infected individuals in recovery. The epidemiological scheme
is presented in Figure~\ref{cap7_modeloSVIR_mass}.
\begin{figure}[ptbh]
\begin{center}
\includegraphics[scale=0.4]{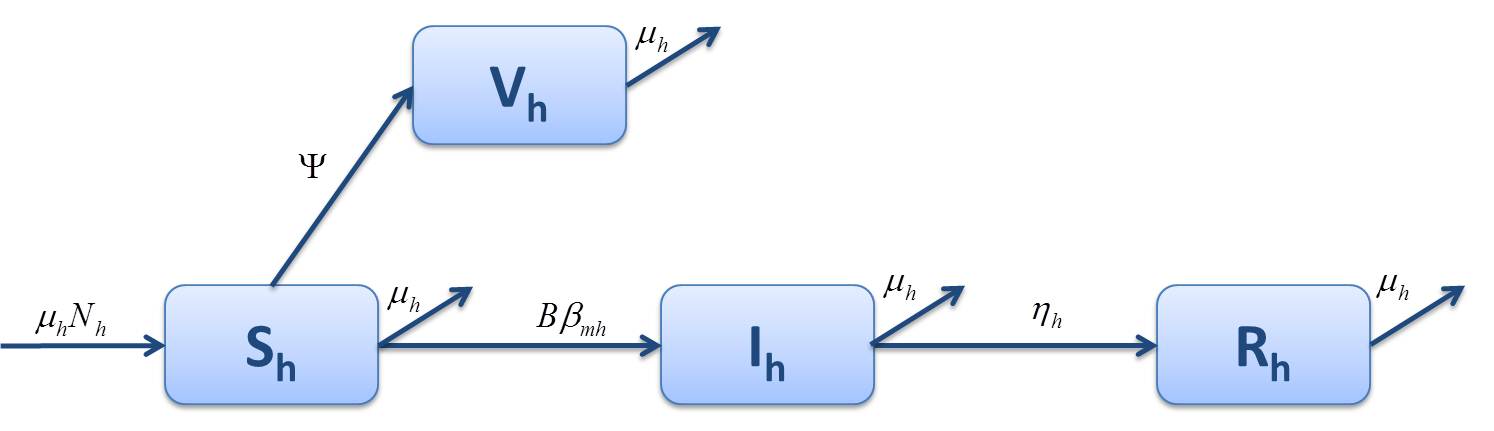}
\end{center}
\caption{Epidemic model for human population using
a mass random vaccine. \label{cap7_modeloSVIR_mass}}
\end{figure}
The mosquito population remains equal to the previous subsection,
while the mathematical formulation for human population in the
new epidemiological scheme is given by
\begin{equation}
\label{cap7_ode_vaccination_mass}
\begin{cases}
\frac{dS_h}{dt}(t) = \mu_h N_h - \left(B\beta_{mh}\frac{I_m(t)}{N_h}+\psi+\mu_h\right)S_h(t)\\
\frac{dV_h}{dt}(t) = \psi S_h(t)-\mu_h V_h(t)\\
\frac{dI_h}{dt}(t) = B\beta_{mh}\frac{I_m(t)}{N_h}S_h(t) -(\eta_h+\mu_h) I_h(t)\\
\frac{dR_h}{dt}(t) = \eta_h I_h(t) - \mu_h R_h(t).
\end{cases}
\end{equation}
For this model, we define a new basic reproduction number $\mathcal{R}_0^{\psi}$.

\begin{definition}[cf. \cite{Zhou2003}]
\label{chap7_thm:r0:mass}
The basic reproduction number with random mass vaccination,
$\mathcal{R}_0^{\psi}$, associated to the differential
system \eqref{cap7_ode_vaccination_mass}, is given by
\begin{equation}
\label{eq:R0_mass}
\mathcal{R}_0^{\psi} = \mathcal{R}_{0}\left(\frac{\mu_h}{\mu_h+\psi}\right),
\end{equation}
where $\mathcal{R}_{0}$ is defined by \eqref{cap7:eq:R0}.
\end{definition}

Comparing this model with the model of constant vaccination of newborns,
it is apparent that instead of constantly vaccinating a portion of newborns,
a part of the entire susceptible population is now being continuously
vaccinated. Since the natural birth rate $\mu_h$ is usually small, the fraction $p\mu_h$
of newborns being continuously vaccinated will be also small, whereas in this
model, a larger group $\psi S_h$ of susceptible can be continuously vaccinated.
For this reason, we expect this model to require
a smaller proportion $\psi$ to achieve eradication.
Note that $\mathcal{R}_0^{\psi}\leq\mathcal{R}_0$. Equality is only achieved
in the limit, when $\psi=0$, that is, when there is no vaccination.
The constraint $\mathcal{R}_0^{\psi}<1$ defines implicitly a critical
vaccination portion $\psi > \psi_{c}$ that must be achieved for eradication:
$$
\psi_{c}=\left(\mathcal{R}_{0}-1\right)\mu_h.
$$
Figure~\ref{cap7_psi_variation} illustrates the variation of the number
of infected people when a mass vaccination is introduced.
The graphs present five simulations using different proportions
of the susceptible being vaccinated:
$\psi=0.05, 0.10, 0.25, 0.50, 1$. Observe that,
in spite of the calculations being done in the period of 365 days, the figures
only show suitable windows, in order to provide a better analysis.
In both epidemic and endemic scenarios, even with a small coverage of the population,
vaccination dramatically decreases the number of infected.
In the epidemic scenario, the situation has changed from around 80000 cases
(with no vaccination, Figure~\ref{cap7_population_no_vaccine})
to less than 1200 cases, vaccinating only 5\% of the susceptible.
In the endemic scenario, the decrease is even more accentuated.
\begin{figure}
\centering
\subfloat[Epidemic scenario]{\label{cap7_epid_psi_variation}
\includegraphics[scale=0.53]{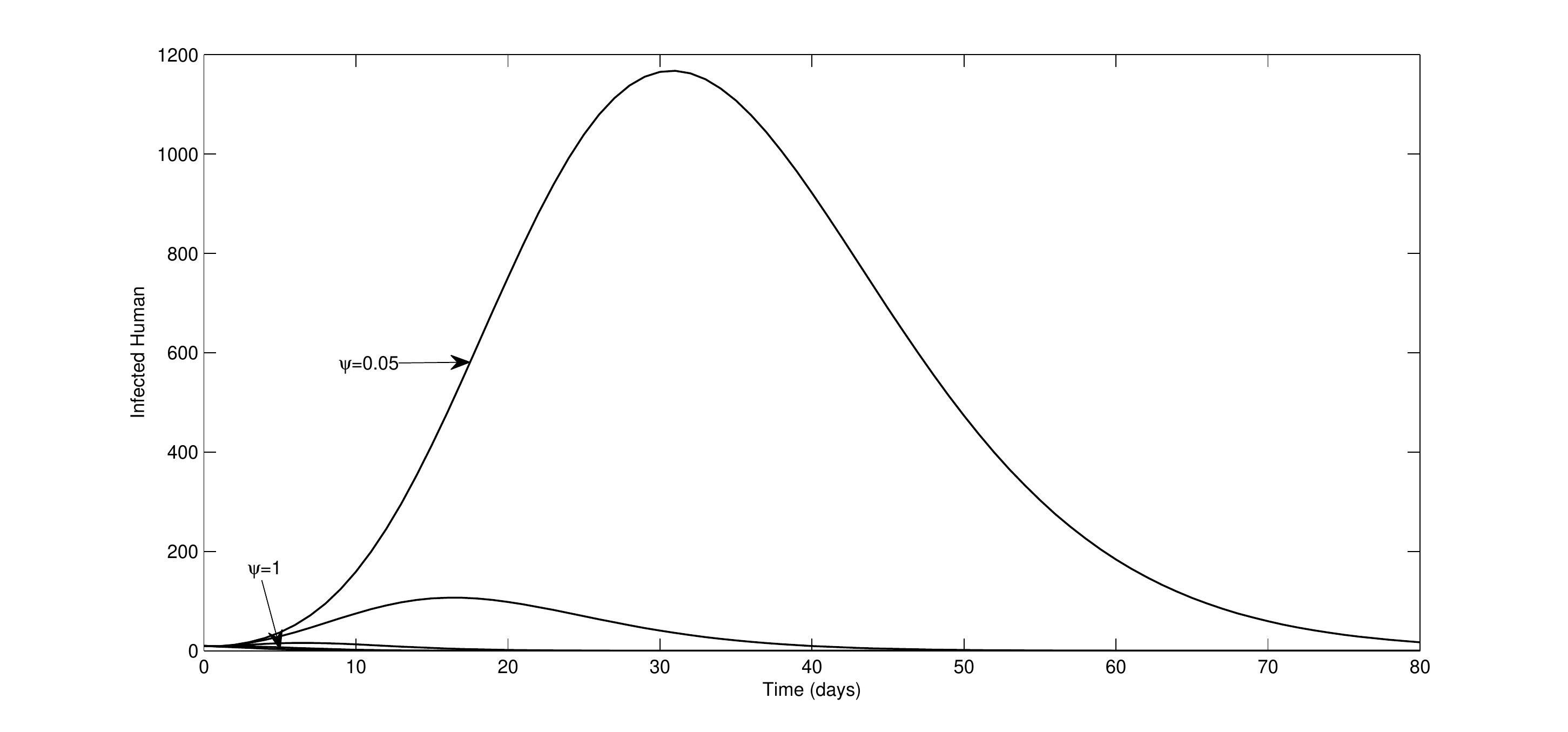}}\\
\subfloat[Endemic scenario]{\label{cap7_end_psi_variation}
\includegraphics[scale=0.53]{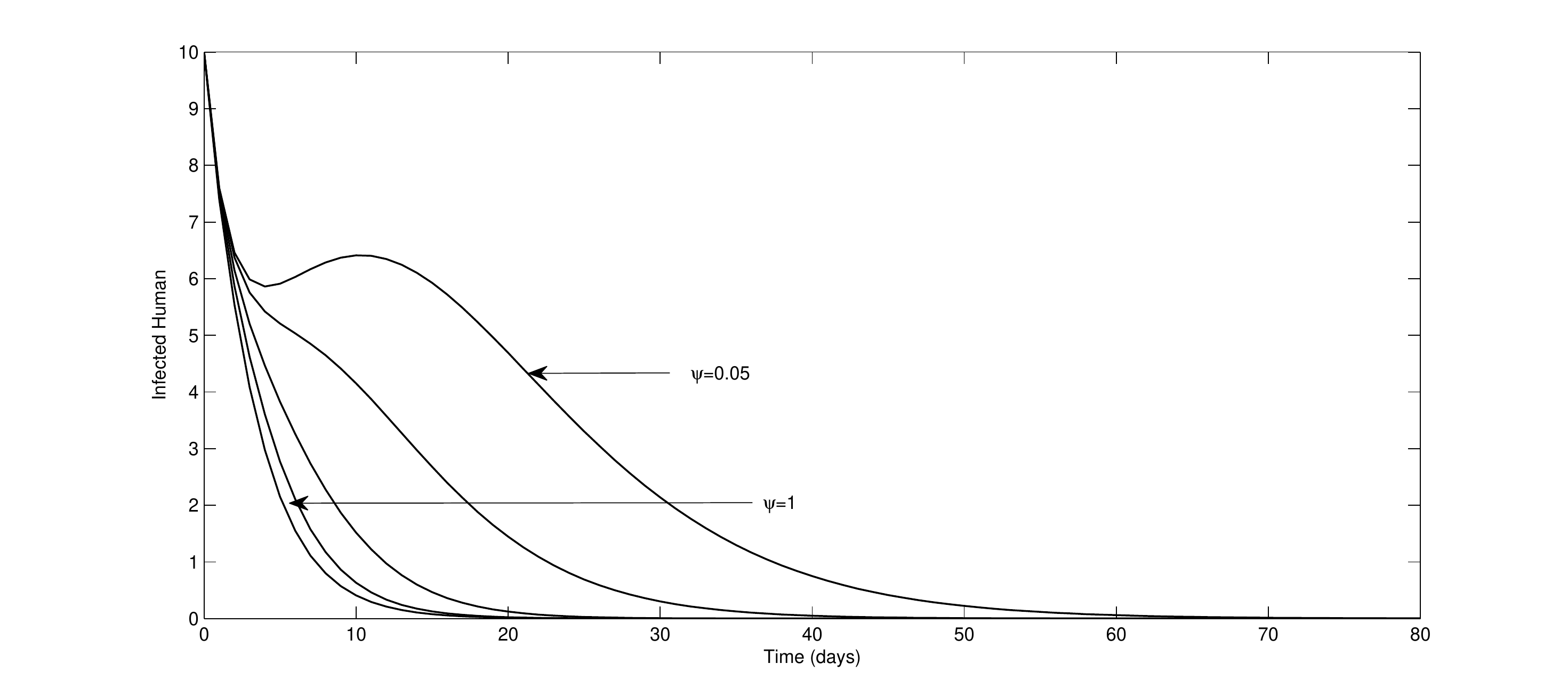}}
\caption{Infected human in an outbreak, varying the proportion
of susceptible population vaccinated ($\psi=0.05, 0.10, 0.25, 0.50, 1$).}
\label{cap7_psi_variation}
\end{figure}

Until here, we have considered a perfect vaccine, which means that
every vaccinated individual remains resistant to the disease.
However, a majority of the available vaccines for the human population
does not produce 100\% success in the disease battle. Usually,
the vaccines are imperfect, which means that a minor percentage of cases,
in spite of vaccination, are infected.


\subsection{Imperfect random mass vaccination}
\label{sec:7:2:3}

Most of the theory about disease evolution is based on the assumption
that the host population is homogeneous. Individual hosts, however,
may differ and they may constitute very different habitats. In particular,
some habitats may provide more resources or be more vulnerable
to virus exploitation \cite{Gandon2003}. The use of models with
imperfect vaccines can describe better this type of human heterogeneity.
Another explanation for the use of imperfect vaccines is that until now
we had considered models assuming that as soon as individuals
begin the vaccination process, they become immediately immune to the disease.
However, the time it takes for individuals to obtain immunity by completing
a vaccination process cannot be ignored, because meanwhile an individual can be infected.
In this section a continuous vaccination strategy is considered,
where a fraction $\psi$ of the susceptible class was vaccinated.
The vaccination may reduce but not completely eliminate susceptibility to infection.
For this reason, we consider a factor $\sigma$ as the infection rate of
vaccinated members. When $\sigma=0$, the vaccine is perfectly
effective, when $\sigma=1$, the vaccine has no effect at all.
The value $1-\sigma$ can be understood as the efficacy level of the vaccine.
The new model for the human population is represented
in Figure~\ref{cap7_modeloSVIR_imperfect}.
\begin{figure}[ptbh]
\center
\includegraphics[scale=0.4]{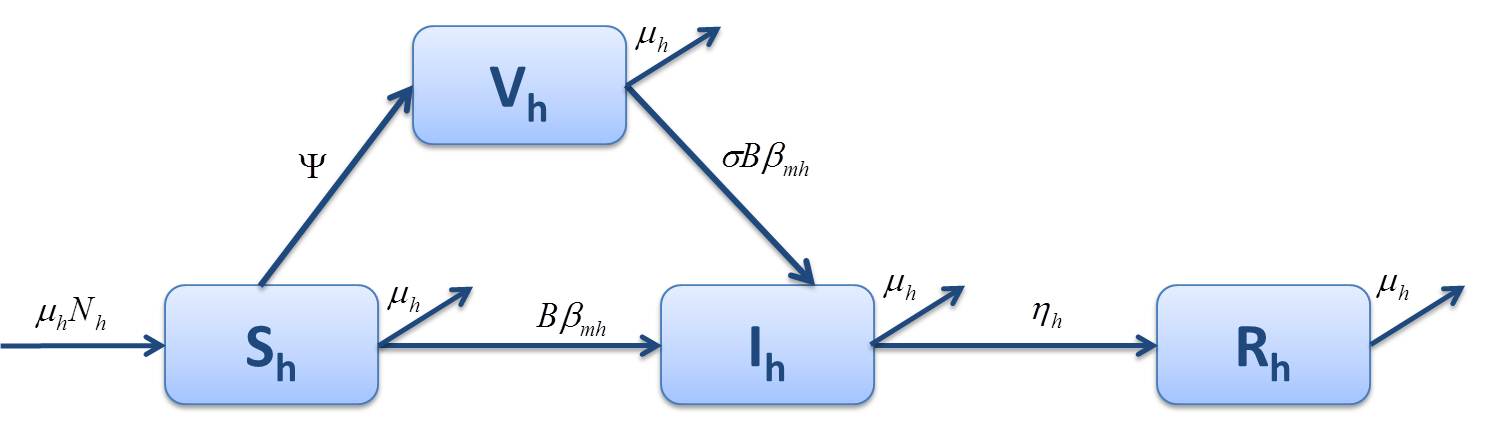}
\caption{\label{cap7_modeloSVIR_imperfect} Epidemiological $SVIR$ model
for human population with an imperfect vaccine.}
\end{figure}
Accordingly, we have the following system of differential equations:
\begin{equation}
\label{cap7_ode_vaccination_imperfect}
\begin{cases}
\frac{dS_h}{dt} = \mu_h N_h  - \left(B\beta_{mh}\frac{I_m}{N_h}+\psi +\mu_h\right)S_h\\
\frac{dV_h}{dt} = \psi S_h-\left(\sigma B \beta_{mh}\frac{I_m}{N_h}+\mu_h\right)V_h\\
\frac{dI_h}{dt} = B\beta_{mh}\frac{I_m}{N_h}(S_h+\sigma V_h) -(\eta_h+\mu_h) I_h\\
\frac{dR_h}{dt} = \eta_h I_h - \mu_h R_h.
\end{cases}
\end{equation}
As expected, a new basic reproduction number is associated to
\eqref{cap7_ode_vaccination_imperfect}.

\begin{definition}[cf. \cite{Liu2008}]
\label{chap7_thm:r0:imperfect}
The basic reproduction number with an imperfect vaccine, $\mathcal{R}_0^{\sigma}$,
associated to the differential system \eqref{cap7_ode_vaccination_imperfect},
is defined by
\begin{equation*}
\label{eq:R0_imperfect}
\mathcal{R}_0^{\sigma} = \left(1+\sigma \psi\right)\frac{\mu_h}{\mu_h
+\psi}\mathcal{R}_{0}=\left(1+\sigma \psi\right)\mathcal{R}_0^{\psi},
\end{equation*}
where $\mathcal{R}_0^{\psi}$ is given by \eqref{eq:R0_mass}.
\end{definition}

Note that $\mathcal{R}_0^{\psi}\leq\mathcal{R}_0^{\sigma}$ and when
the vaccine is perfect, i.e., $\sigma=0$, $\mathcal{R}_0^{\sigma}$ degenerates
into $\mathcal{R}_0^{\psi}$. In other words, a high efficacy vaccine
leads to a lower vaccination coverage to eradicate the disease.
However, it is realized in \cite{Liu2008} that it is much more difficult to increase the
efficacy level of the vaccine when compared to controlling the vaccination rate $\psi$.
Figures~\ref{cap7_sigma_variation:A} and \ref{cap7_sigma_variation:B} show several simulations,
by varying the vaccine efficacy and the percentage of population that is vaccinated.
Comparing with Figure~\ref{cap7_epid_psi_variation}, in the epidemic scenario
with a perfect vaccine, the number of human infected has reached to a maximum peak
of 1200 cases per day, in the worst scenario ($\psi=0.05$). Using an imperfect vaccine,
with a level of efficacy of 80\% (Figure~\ref{epid_psi_vary}),
with the same values for $\psi$, the maximum peak increases until 9000 cases.
We conclude that the production of a vaccine with a high level
of efficacy has a preponderant role in the reduction of the disease spread.
Figures~\ref{epid_sigma_vary} and \ref{end_sigma_vary} reinforce the previous sentence.
Assuming that 85\% of the population is vaccinated, the number of infected cases
decreases sharply with the increasing of the  effectiveness level of the vaccine.
\begin{figure}
\centering
\subfloat[Epidemic scenario]{\label{epid_psi_vary}
\includegraphics[scale=0.53]{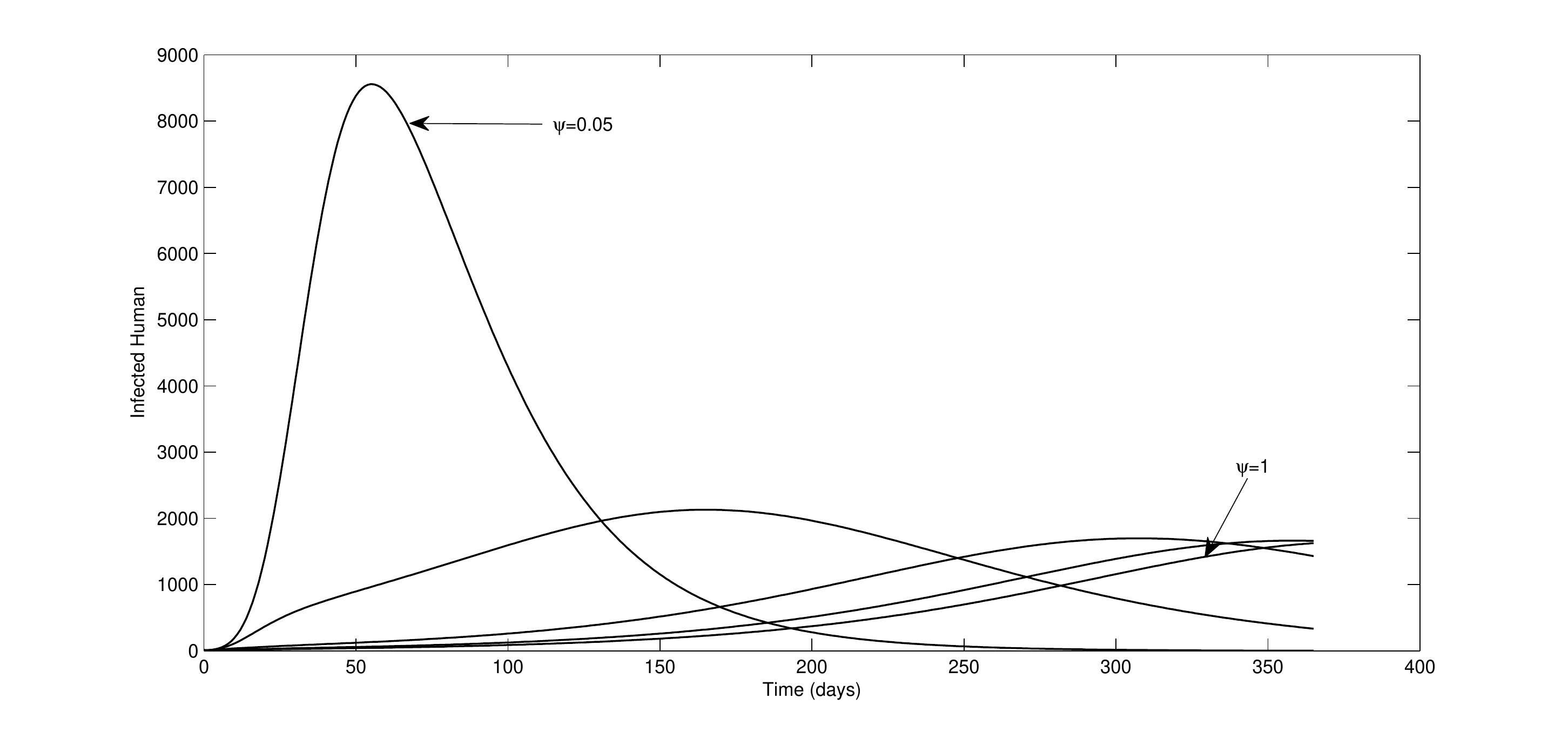}}\\
\subfloat[Endemic scenario]{\label{end_psi_vary}
\includegraphics[scale=0.53]{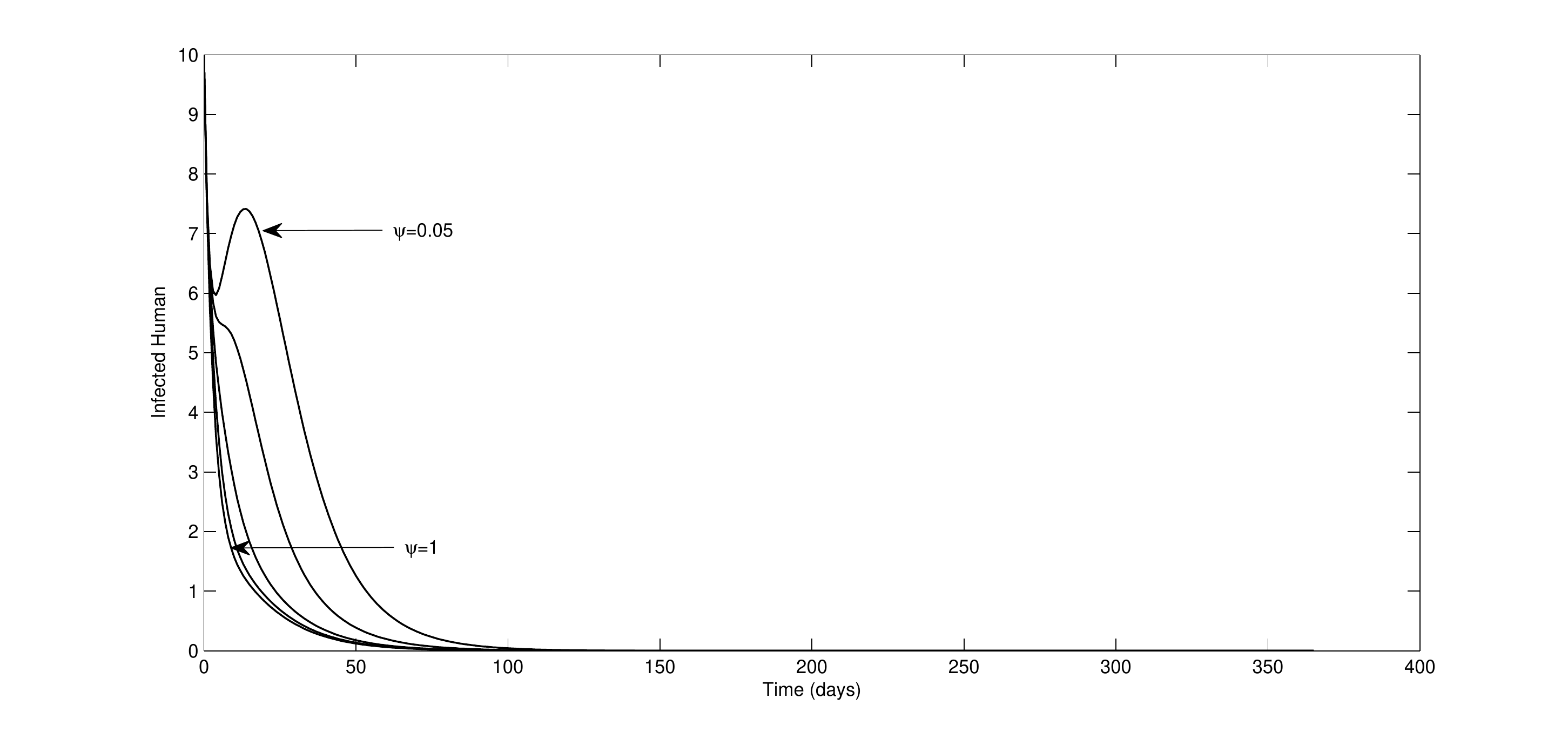}}
\caption{Infected humans in an outbreak varying
the proportion of susceptible population vaccinated ($\psi=0.05, 0.10, 0.25, 0.50, 1$)
with a vaccine simulating 80\% of effectiveness ($\sigma=0.2$)}
\label{cap7_sigma_variation:A}
\end{figure}
\begin{figure}
\centering
\subfloat[Epidemic scenario]{\label{epid_sigma_vary}
\includegraphics[scale=0.53]{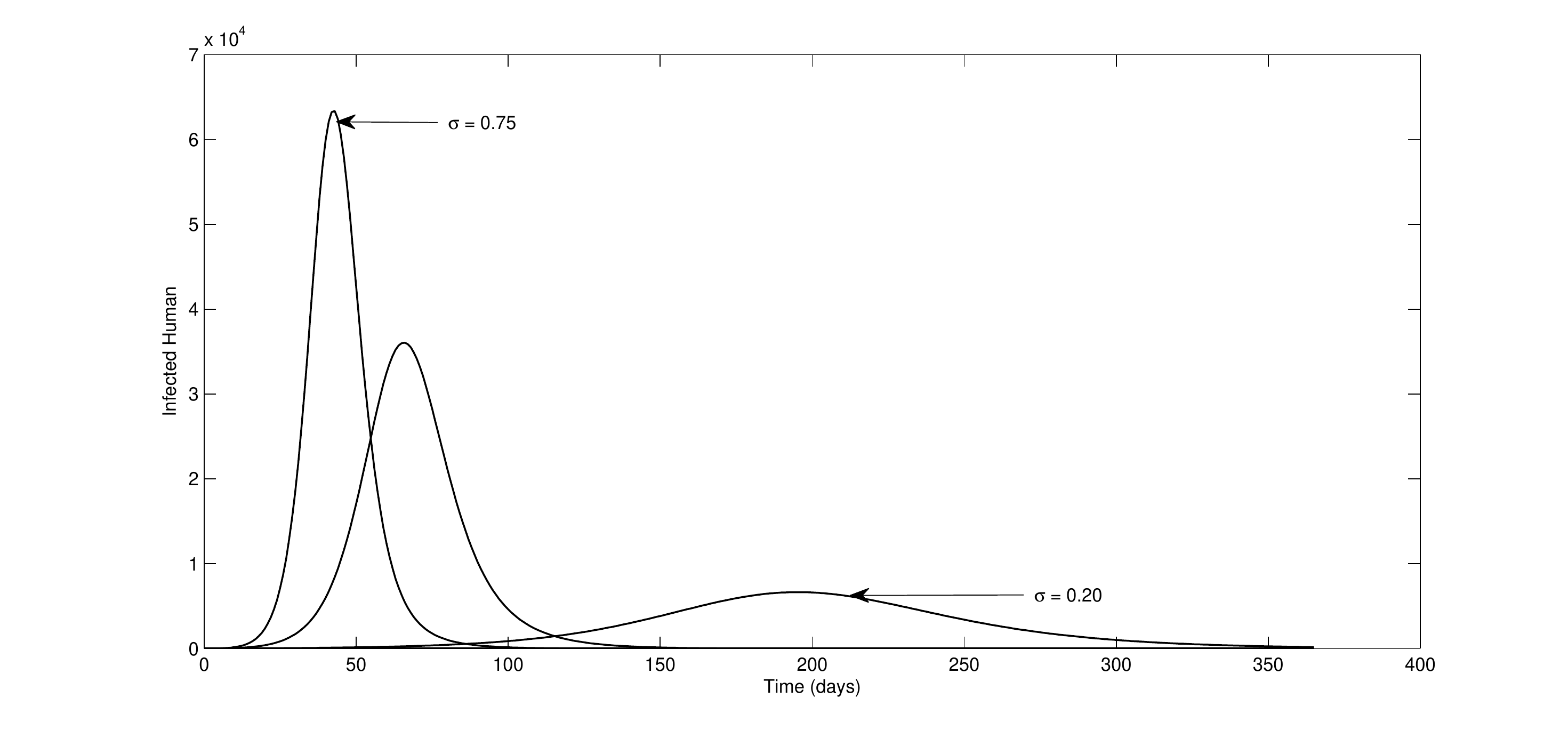}}\\
\subfloat[Endemic scenario]{\label{end_sigma_vary}
\includegraphics[scale=0.53]{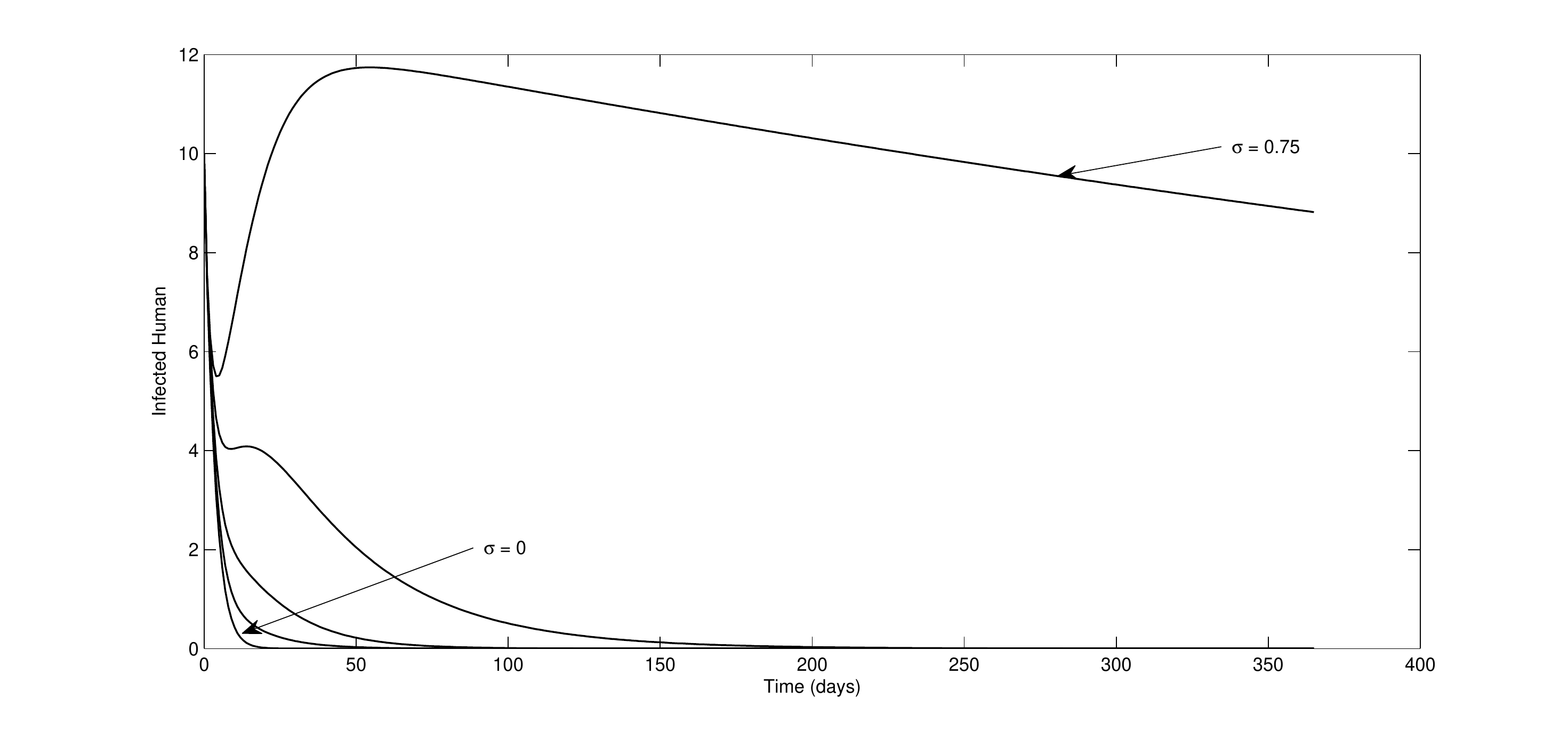}}
\caption{Infected humans in an outbreak varying the efficacy
level of the vaccine ($\sigma=0, 0.10, 0.20, 0.50, 0.75$)
and considering that 85\% of the human susceptible
population is vaccinated ($\psi=0.85$).}
\label{cap7_sigma_variation:B}
\end{figure}
According to \cite{DeRoeck2003}, an acceptable level of efficacy is
at least 80\%  against all four serotypes, and 3 to 5 for the
length of protection. These values are commonly considered, across countries,
as the minimum acceptable levels.

Next we study another type of imperfect vaccine:
one that confers a limited life-long protection.


\subsection{Random mass vaccination with waning immunity}
\label{sec:7:2:4}

Until the 1990s, the universal assumption
of mathematical models of vaccination was: there is no
waning of vaccine-induced immunity. This assumption was
routinely made because, for most of the major vaccines against childhood
infectious diseases, it is approximately correct \cite{Scherer2002}.

Suppose that the immunity, obtained by the vaccination process, is temporary.
Assume that immunity has the waning rate $\theta$. Then the model for humans is given by
\begin{equation}
\label{cap7_ode_vaccination_waning}
\begin{cases}
\frac{dS_h}{dt}(t) = \mu_h N_h  +\theta V_h(t)
- \left(B\beta_{mh}\frac{I_m(t)}{N_h}+\psi +\mu_h\right)S_h(t)\\
\frac{dV_h}{dt}(t) = \psi S_h(t)-\left(\theta +\mu_h\right)V_h(t)\\
\frac{dI_h}{dt}(t) = B\beta_{mh}\frac{I_m(t)}{N_h}S_h(t) -(\eta_h+\mu_h) I_h(t)\\
\frac{dR_h}{dt}(t) = \eta_h I_h(t) - \mu_h R_h(t).
\end{cases}
\end{equation}
This model can be represented by the epidemiological scheme of
Figure~\ref{cap7_modeloSVIR_waning}.
\begin{figure}[ptbh]
\center
\includegraphics[scale=0.4]{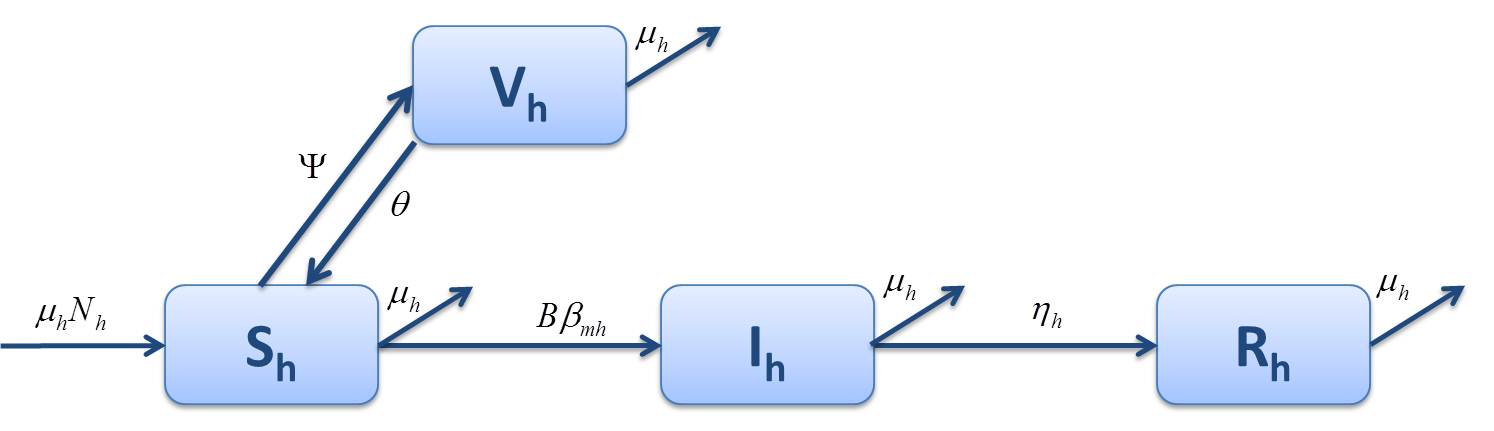}
\caption{\label{cap7_modeloSVIR_waning} Epidemiological $SVIR$ model
for human population with a waning immunity vaccine.}
\end{figure}
This leads naturally to the following basic reproduction number.

\begin{definition}
\label{chap7_thm:r0:waning}
The basic reproduction number with an imperfect vaccine,
$\mathcal{R}_0^{\theta}$, associated to the differential
system \eqref{cap7_ode_vaccination_waning}, is defined by
$\mathcal{R}_0^{\theta} = \mathcal{R}_0^{\psi}$,
where $\mathcal{R}_{0}^{\psi}$ is given by \eqref{eq:R0_mass}.
\end{definition}

According to \cite{Stechlinski2009}, the basic reproduction numbers
$\mathcal{R}_0^{\theta}$ and $\mathcal{R}_0^{\psi}$ are the same,
because the disease will still spread at the same rate with or without temporary immunity.
However, we should expect that the convergence rate will be different
between the random mass vaccination and random mass vaccination with waning immunity,
since the disease will be eradicated faster in the constant treatment model without
waning immunity compared to the other with waning immunity.
Figure~\ref{cap7_theta_variation} illustrates this statement.
Considering that 85\% of human population is vaccinated, the number of infected
is increasing as the value of the waning immunity is growing
($\theta=0, 0.05, 0.10, 0.15, 0.20$).
\begin{figure}
\centering
\subfloat[Epidemic scenario]{\label{cap7_epid_theta_variation}
\includegraphics[scale=0.53]{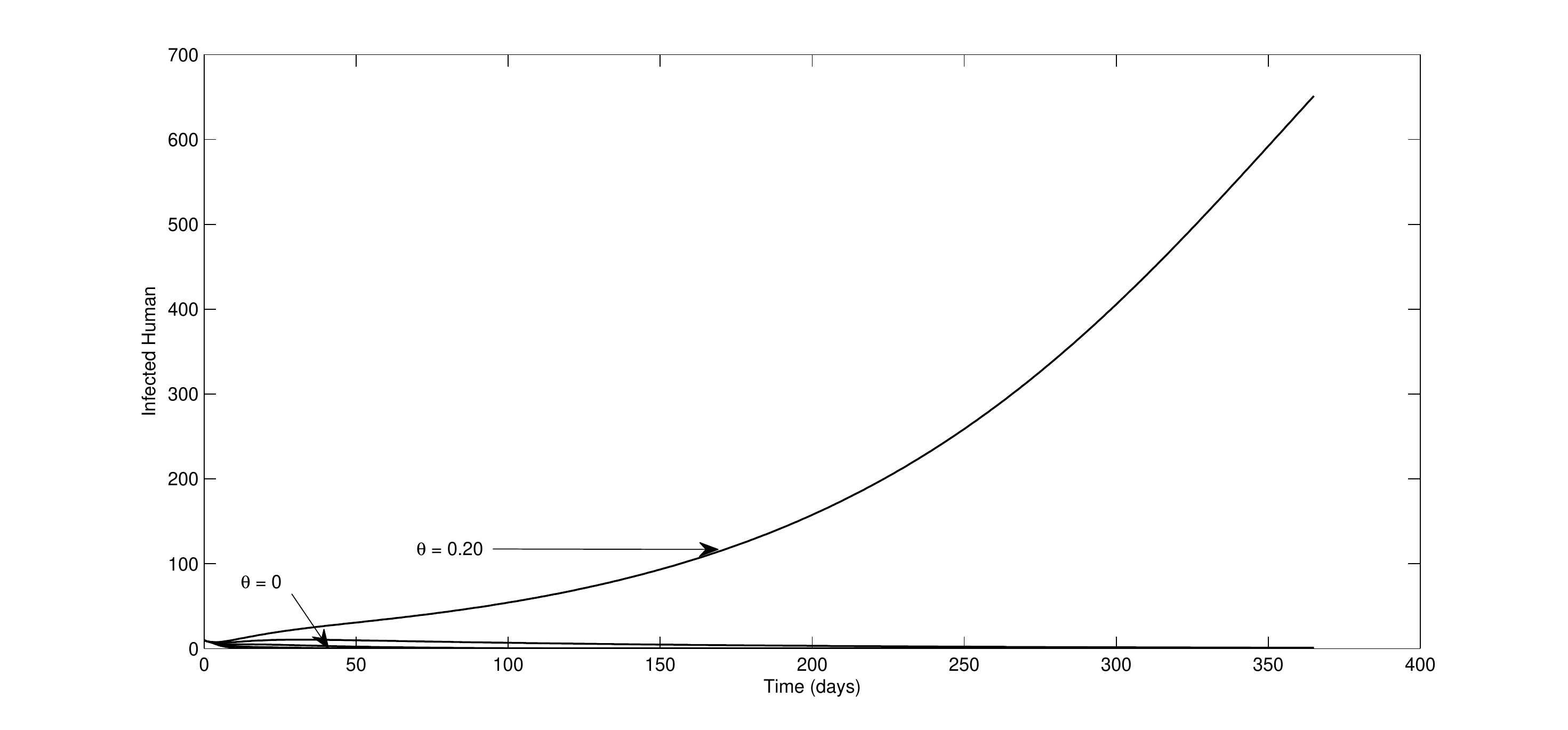}}\\
\subfloat[Endemic scenario]{\label{cap7_end_theta_variation}
\includegraphics[scale=0.53]{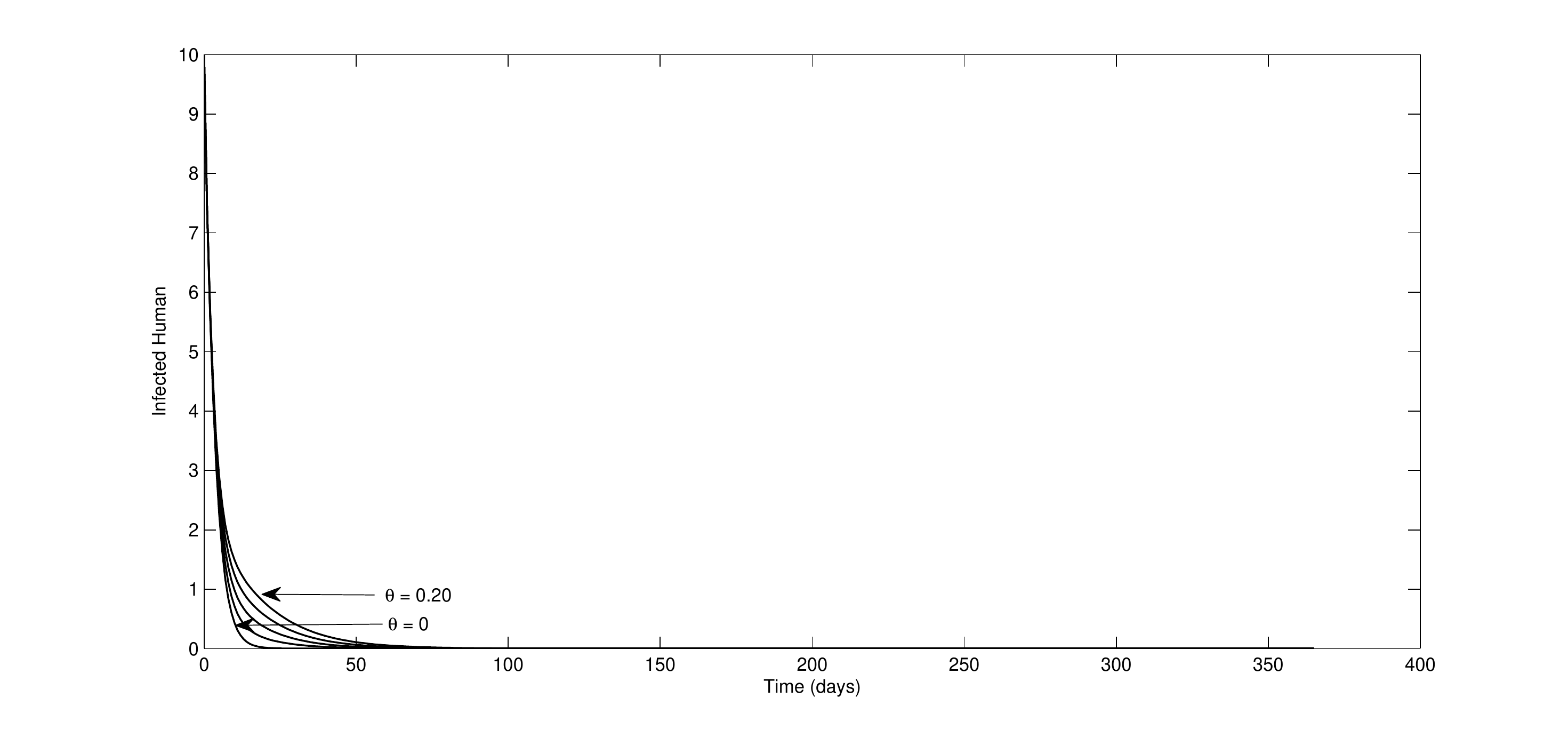}}
\caption{Infected humans in an outbreak considering 85\% of
susceptible population vaccinated ($\psi=0.85$)
and varying waning immunity $(\theta=0, 0.05, 0.10, 0.15, 0.20$).}
\label{cap7_theta_variation}
\end{figure}

Depending on the vaccine that will be available on the market,
it will be possible to choose or even combine features.
In the next section we define the vaccination process
as a control system.


\section{Vaccine as a control}
\label{sec:7:3}

In this section we consider a SIR model for humans and an ASI model for mosquitoes.
The parameters remain the same as in the previous section.
The vaccination is seen as a control variable to reduce
or even eradicate the disease. Let $u$
be the control variable: $0 \le u(t) \le 1$
denotes the percentage of susceptible individuals
that one decides to vaccinate at time $t$.
A random mass vaccination with waning immunity is selected. In this way,
a parameter $\theta$ associated to the control $u$ represents
the waning immunity process. Figure~\ref{cap7_modeloSIR_control_vaccine}
shows the epidemiological scheme for the human population.
Note that $R_h$ includes both vaccinated and naturally-immune individuals; 
only vaccinated immune individuals have waning immunity.
\begin{figure}[ptbh]
\center
\includegraphics[scale=0.4]{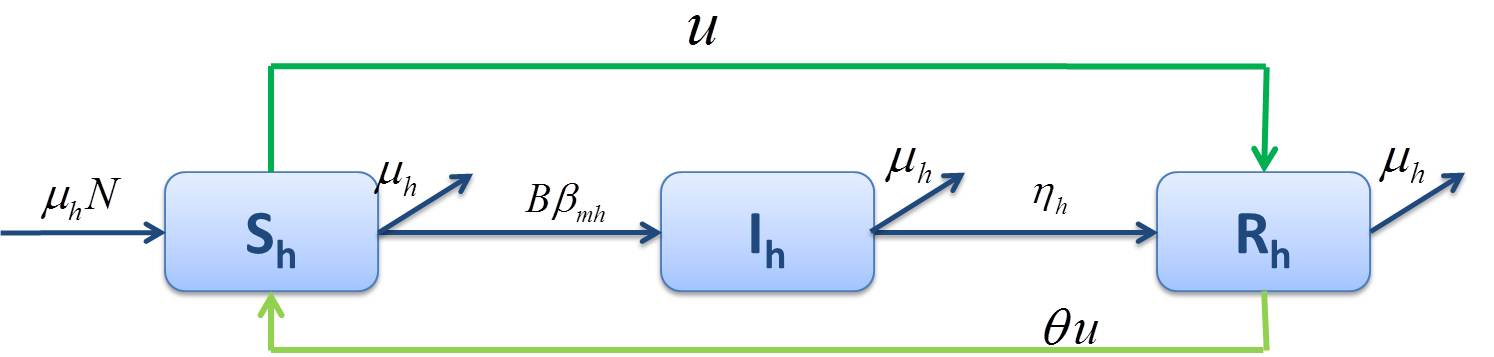}
\caption{\label{cap7_modeloSIR_control_vaccine} Epidemiological $SIR$ model
for the human population using the vaccine as a control.}
\end{figure}
The model is described by an initial value problem with a system
of six differential equations:
\begin{equation}
\label{cap7_ode}
\begin{cases}
\frac{dS_h}{dt} = \mu_h N_h- \left(B\beta_{mh}\frac{I_m}{N_h}+\mu_h+u\right)S_h+\theta u R_h\\
\frac{dI_h}{dt} = B\beta_{mh} \frac{I_m}{N_h} S_h -(\eta_h+\mu_h) I_h\\
\frac{dR_h}{dt} = \eta_h I_h + u S_h - \left(\theta u+\mu_h\right) R_h\\
\frac{dA_m}{dt} = \varphi \left(1-\frac{A_m}{k N_h}\right) (S_m+I_m) - \left(\eta_A+\mu_A\right) A_m\\
\frac{dS_m}{dt} = \eta_A A_m - \left(B \beta_{hm}\frac{I_h}{N_h}+\mu_m\right) S_m\\
\frac{dI_m}{dt} = B \beta_{hm}\frac{I_h}{N_h}S_m -\mu_m I_m.
\end{cases}
\end{equation}
The main aim is to study
the optimal vaccination strategy, considering both
the costs of treatment of infected individuals and the costs
of vaccination. The objective is to
\begin{equation}
\label{cap7_functional}
\text{ minimize } J[u]=\int_{0}^{t_f}\left[\gamma_D I_h(t)^2
+\gamma_V u(t)^2\right]dt,
\end{equation}
where $\gamma_D$ and $\gamma_V$ are positive constants
representing the weights of the costs
of treatment of infected people and vaccination, respectively.
We solve the problem using optimal control theory.


\subsection{Pontryagin's Maximum Principle}

Let us consider the following set of admissible control functions:
$$
\Delta=\{u(\cdot)\in L^{\infty}(0,t_f) \, | \, 0 \leq u(t) \leq 1, \forall t \in [0,t_f]\}.
$$

\begin{theorem}
The problem \eqref{cap7_ode}--\eqref{cap7_functional}
with initial conditions (see Table~\ref{chap7_initial_conditions}), admits
a unique optimal solution $\left(S_h^{*}(\cdot),I_h^{*}(\cdot),
R_h^{*}(\cdot),A_m^{*}(\cdot),S_m^{*}(\cdot),I_m^{*}(\cdot)\right)$
associated with an optimal control $u^{*}(\cdot)$ on $[0,t_f]$,
with a fixed final time $t_f$. Moreover, there exists adjoint functions
$\lambda_i^{*}(\cdot)$, $i=1,\ldots,6$, satisfying
\begin{equation}
\label{cap7_lambda_res}
\begin{tabular}{l}
$\left\{
\begin{array}{l}
\dot{\lambda}_{1}^{*}(t) = (\lambda_1-\lambda_2) \left(B \beta_{mh} \frac{I_m(t)}{N_h}\right)
+\lambda_1 \mu_h+(\lambda_1-\lambda_3)u(t)\\
\dot{\lambda}_{2}^{*}(t) = -2\gamma_D I_h(t)+\lambda_2(\eta_h+\mu_h)
-\lambda_3\eta_h+(\lambda_5-\lambda_6)\left(B\beta_{hm}\frac{S_m(t)}{N_h}\right)\\
\dot{\lambda}_{3}^{*}(t) = -\lambda_1\theta u(t)+\lambda_3(\mu_h+\theta u(t))\\
\dot{\lambda}_{4}^{*}(t) = \lambda_4 \varphi \frac{S_m(t)+I_m(t)}{k N_h}
+\lambda_4(\eta_A+\mu_A)-\lambda_5\eta_A\\
\dot{\lambda}_{5}^{*}(t) = -\lambda_4 \varphi \left(1-\frac{A_m(t)}{k N_h}\right)
+(\lambda_5-\lambda_6) B\beta_{hm} \frac{I_h(t)}{N_h}+\lambda_5\mu_m\\
\dot{\lambda}_{6}^{*}(t) = (\lambda_1-\lambda_2) \left(B\beta_{mh}\frac{S_h(t)}{N_h}\right)
-\lambda_4 \varphi \left(1-\frac{A_m(t)}{k N_h}\right) + \lambda_6 \mu_m\\
\end{array}
\right. $\\
\end{tabular}
\end{equation}
and the transversality conditions
$\lambda_{i}^{*}(t_f)=0$, i=1, \ldots 6. Furthermore,
\begin{equation}
\label{cap7_control}
u^{*}(t)=\min\left\{1,\max\left\{0,\frac{\left(\lambda_1(t)
-\lambda_3(t)\right)\left(S_h(t)-\theta R_h(t)\right)}{2\gamma_V}\right\}\right\}.
\end{equation}
\end{theorem}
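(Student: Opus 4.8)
The plan is to establish the result in three stages: existence of an optimal pair, derivation of the first-order necessary conditions through Pontryagin's Maximum Principle, and uniqueness. For existence I would first check that every admissible control produces a bounded state trajectory on $[0,t_f]$. Adding the first three equations of \eqref{cap7_ode} gives $\frac{d}{dt}(S_h+I_h+R_h)=\mu_h N_h-\mu_h(S_h+I_h+R_h)$, so the human compartment stays equal to $N_h$; the aquatic stage satisfies $\dot A_m\le 0$ whenever $A_m=kN_h$ and the adult mosquito total obeys $\frac{d}{dt}(S_m+I_m)\le\eta_A kN_h-\mu_m(S_m+I_m)$, hence all six states remain in a fixed compact set $\Omega$. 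Consequently the right-hand side of \eqref{cap7_ode} is bounded by an affine function of the state, uniformly in $u\in[0,1]$. Since the admissible set $\Delta$ is nonempty (it contains $u\equiv 0$), the control values lie in the compact convex interval $[0,1]$, the dynamics are affine in $u$, and the running cost $\gamma_D I_h^2+\gamma_V u^2$ is convex in $u$ with $\gamma_D I_h^2+\gamma_V u^2\ge\gamma_V|u|^2$, the classical existence theorem for optimal controls (Fleming and Rishel, or Cesari) applies and yields an optimal pair $(S_h^{*},I_h^{*},R_h^{*},A_m^{*},S_m^{*},I_m^{*},u^{*})$.

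For the necessary conditions I would introduce the Hamiltonian
\[
H=\gamma_D I_h^2+\gamma_V u^2+\sum_{i=1}^{6}\lambda_i\, g_i,
\]
where $g_1,\dots,g_6$ denote the right-hand sides of the six equations in \eqref{cap7_ode}. Pontryagin's Maximum Principle then provides adjoint functions with $\dot\lambda_i^{*}=-\partial H/\partial x_i$ along the optimal trajectory; computing the six partial derivatives — in particular $\partial H/\partial S_h$, $\partial H/\partial I_h$ and $\partial H/\partial R_h$, which bring in the coupling terms $(\lambda_1-\lambda_2)B\beta_{mh}I_m/N_h$, $(\lambda_5-\lambda_6)B\beta_{hm}S_m/N_h$ and $(\lambda_1-\lambda_3)u$ — reproduces exactly the system \eqref{cap7_lambda_res}, and since the terminal state is unconstrained the transversality conditions are $\lambda_i^{*}(t_f)=0$. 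Finally, $u^{*}(t)$ must minimise $u\mapsto H$ over $[0,1]$ for almost every $t$; since $H$ is a strictly convex quadratic in $u$ with leading coefficient $\gamma_V>0$, the unconstrained stationarity condition $\partial H/\partial u=2\gamma_V u-(\lambda_1-\lambda_3)(S_h-\theta R_h)=0$ determines the unique interior minimiser, and truncating its value to $[0,1]$ gives precisely \eqref{cap7_control}.

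For uniqueness I would exploit the boundedness obtained above: on $\Omega$ the adjoint system \eqref{cap7_lambda_res} is linear in $\lambda$ with coefficients bounded on $[0,t_f]$, so the terminal-value problem $\lambda_i^{*}(t_f)=0$ has bounded solutions, and the projection map together with the product $(\lambda_1-\lambda_3)(S_h-\theta R_h)$ appearing in \eqref{cap7_control} is Lipschitz on the resulting bounded set. Substituting \eqref{cap7_control} into \eqref{cap7_ode} and coupling with \eqref{cap7_lambda_res} produces an optimality system whose right-hand side is Lipschitz in $(x,\lambda)$; a Gronwall estimate then shows this system has at most one solution when $t_f$ is sufficiently small, which forces $u^{*}$ to be unique (this is the usual argument of Fister, Lenhart and McNally). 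I expect this last stage to be the only delicate point: one must track the Lipschitz constants of the coupled state--adjoint flow and make the smallness condition on $t_f$ explicit, whereas everything preceding it is a routine verification of the hypotheses of standard theorems.
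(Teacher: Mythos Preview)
Your proposal follows essentially the same route as the paper --- existence via convexity/Lipschitz hypotheses in the style of Cesari, then Pontryagin's principle to derive the adjoint system and the projected control formula --- so on the level of strategy there is nothing different. The paper's own proof is considerably terser: it simply asserts that existence follows from convexity of the integrand in $u$ and the Lipschitz property of the state dynamics (citing \cite{Cesari1983,Silva2012}), writes down the Hamiltonian, and then states that \eqref{cap7_lambda_res} and \eqref{cap7_control} come from $\dot\lambda_i=-\partial H/\partial x_i$ and the minimality condition respectively. In particular it does not carry out the a~priori boundedness argument you sketch, nor does it give any separate treatment of uniqueness; your explicit invariant-region computation and the Fister--Lenhart--McNally contraction argument go beyond what the paper provides. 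Your observation that the standard uniqueness argument only yields the conclusion for sufficiently small $t_f$ is correct and is a caveat the paper glosses over, so in that respect your proposal is more careful than the original.
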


\begin{proof}
The existence of optimal solutions
$\left(S_h^{*}(\cdot),I_h^{*}(\cdot),R_h^{*}(\cdot),A_m^{*}(\cdot),S_m^{*}(\cdot),I_m^{*}(\cdot)\right)$
associated to the optimal control $u^{*}(\cdot)$ comes from the convexity of the integrand
of the cost functional \eqref{cap7_functional} with respect to the control $u$ and the Lipschitz property
of the state system with respect to state variables $\left(S_h,I_h,R_h,A_m,S_m,I_m\right)$
(for more details, see \cite{Cesari1983,Silva2012}). According to the Pontryagin maximum principle
\cite{Pontryagin1962}, if $u^{*}(\cdot) \in \Delta$ is optimal for the problem considered,
then there exists a nontrivial absolutely continuous mapping
$\lambda:[0,t_f]\rightarrow \mathbb{R}$, $\lambda(t)=\left(\lambda_1(t),\lambda_2(t),\lambda_3(t),
\lambda_4(t),\lambda_5(t),\lambda_6(t)\right)$, called the adjoint vector, such that
\begin{equation}
\label{cap7_adjoint_system}
\dot{S}_h=\frac{\partial H}{\partial \lambda_1}, \quad
\dot{I}_h=\frac{\partial H}{\partial \lambda_2}, \quad
\dot{R}_h=\frac{\partial H}{\partial \lambda_3}, \quad
\dot{A}_m=\frac{\partial H}{\partial \lambda_4}, \quad
\dot{S}_m=\frac{\partial H}{\partial \lambda_5}, \quad
\dot{I}_m=\frac{\partial H}{\partial \lambda_6}
\end{equation}
and
\begin{equation}
\label{cap7_lambda_frac}
\dot{\lambda}_1=-\frac{\partial H}{\partial S_h}, \quad
\dot{\lambda}_2=-\frac{\partial H}{\partial I_h}, \quad
\dot{\lambda}_3=-\frac{\partial H}{\partial R_h}, \quad
\dot{\lambda}_4=-\frac{\partial H}{\partial A_m}, \quad
\dot{\lambda}_5=-\frac{\partial H}{\partial S_m}, \quad
\dot{\lambda}_6=-\frac{\partial H}{\partial I_m},
\end{equation}
where the Hamiltonian $H$ is defined by
\begin{equation*}
\begin{split}
H(S_h, I_h,R_h,&A_m,S_m,I_m,\lambda, u) =\gamma_D I_h^2+\gamma_V u^2
+\lambda_1\left(\mu_h N_h- \left(B\beta_{mh}\frac{I_m}{N_h}+\mu_h+u\right)S_h+\theta u R_h\right)\\
&+\lambda_2\left(B\beta_{mh} \frac{I_m}{N_h} S_h -(\eta_h+\mu_h) I_h\right)
+\lambda_3\left(\eta_h I_h + u S_h - (\theta u+\mu_h) R_h\right)\\
&+\lambda_4\left(\varphi \left(1-\frac{A_m}{k N_h}\right) (S_m+I_m) - \left(\eta_A+\mu_A\right) A_m\right)\\
&+\lambda_5\left(\eta_A A_m - \left(B \beta_{hm}\frac{I_h}{N_h}+\mu_m\right) S_m\right)
+\lambda_6\left(B \beta_{hm}\frac{I_h}{N_h}S_m -\mu_m I_m\right),
\end{split}
\end{equation*}
together with the minimality condition
\begin{multline}
\label{cap7_Hamil_min}
H\left(S_h^{*}(t),I_h^{*}(t),R_h^{*}(t),A_m^{*}(t),S_m^{*}(t),I_m^{*}(t),\lambda^{*}(t),u^{*}(t)\right)\\
=\min_{u} H\left(S_h^{*}(t),I_h^{*}(t),R_h^{*}(t),A_m^{*}(t),S_m^{*}(t),I_m^{*}(t),\lambda^{*}(t),u\right),
\end{multline}
satisfied almost everywhere on $[0,t_f]$.
Moreover, the transversality conditions
$\lambda_i(t_f)=0$ hold, $i=1,\ldots 6$.
System \eqref{cap7_lambda_res} is derived from \eqref{cap7_lambda_frac},
and the optimal control \eqref{cap7_control} comes from
the minimality condition \eqref{cap7_Hamil_min}.
\end{proof}


\subsection{Numerical simulations and discussion}
\label{numerical}

The simulations were carried out using the values of Section~\ref{sec:7:2}.
The values chosen for the weights in the objective functional 
\eqref{cap7_functional} were $\gamma_D=0.5$ and $\gamma_V=0.5$.
The system was normalized, putting all variables varying from 0 to 1.
It was considered that the waning immunity was at a rate of $\theta=0.05$.
The optimal control problem was solved using two methods: direct \cite{Betts2001,Trelat2005}
and indirect \cite{Lenhart2007}.
The direct method uses the cost functional \eqref{cap7_functional}
and the state system \eqref{cap7_ode} and was solved by
\texttt{DOTcvp} \cite{Dotcvp}. The indirect method
used is an iterative method with a Runge--Kutta scheme,
solved through \texttt{ode45} of \texttt{MatLab}.
Figure~\ref{cap7_optimal_control} shows the optimal control obtained by both methods.
Note that \texttt{DOTcvp} only gives the optimal control as a constant piecewise function.
Table~\ref{cap7_resultados} shows the costs obtained by the two
methods in both scenarios. The indirect method gives a lower cost.
This method uses more mathematical theory about the problem, such as the adjoint
system \eqref{cap7_adjoint_system} and optimal control expression \eqref{cap7_control}.
Therefore, it makes sense that the indirect method produces a better result.
\begin{figure}[ptbh]
\begin{center}
\subfloat[Epidemic scenario]{\label{cap7_optimal_control_epidemic}
\includegraphics[scale=0.52]{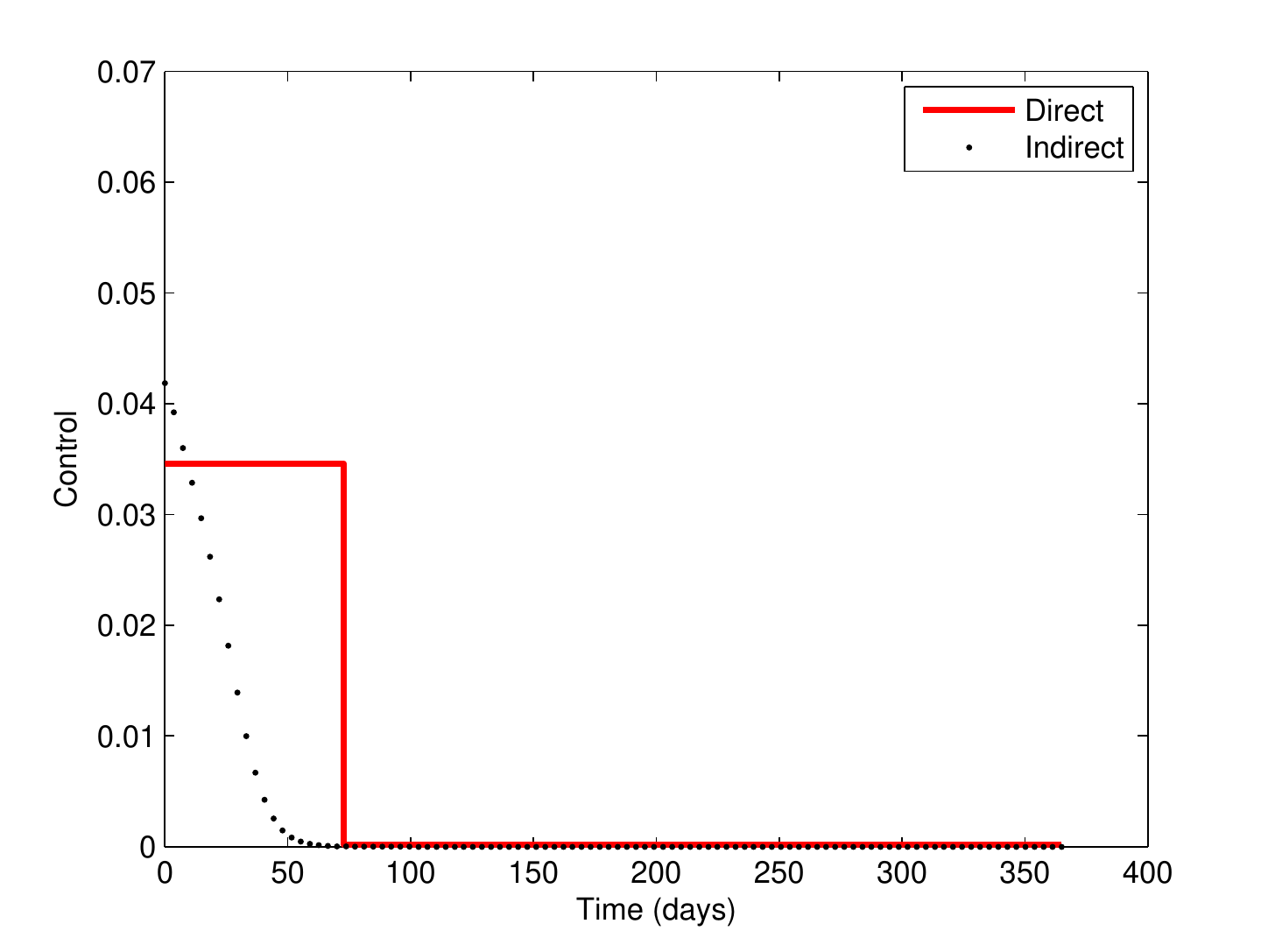}}
\subfloat[Endemic scenario]{\label{cap7_optimal_control_endemic}
\includegraphics[scale=0.52]{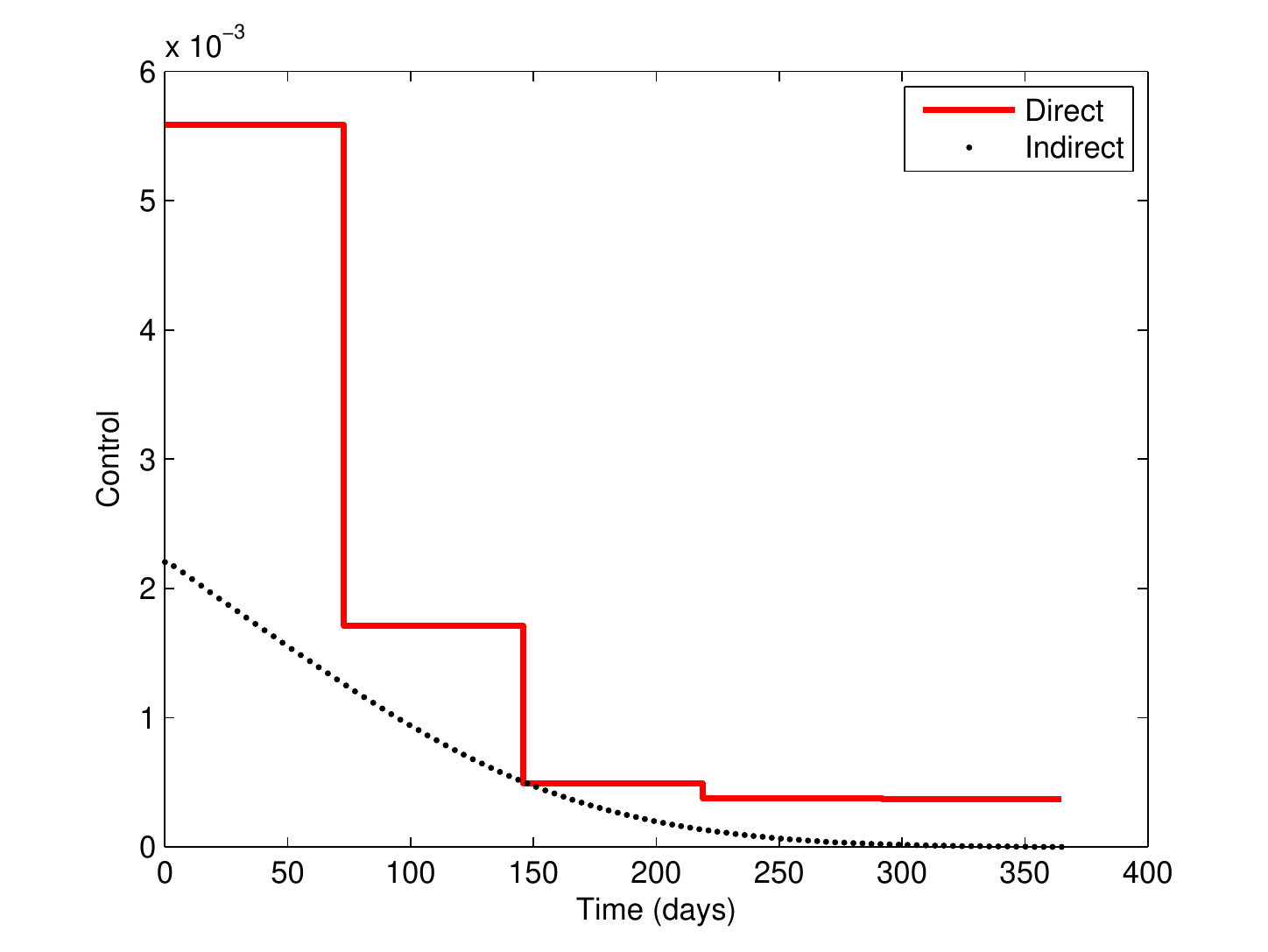}}
\caption{Optimal control obtained from direct and indirect approaches, in both epidemic and endemic scenarios.}
\label{cap7_optimal_control}
\end{center}
\end{figure}
\begin{table}
\begin{center}
\begin{tabular}{lccc}
\hline
Method & Epidemic scenario & Endemic scenario\\
\hline
Direct (DOTcvp) & 0.07505791 & 0.00189056\\
Indirect (backward-forward) & 0.06070556 & 0.00080618\\
\hline
\end{tabular}
\caption{Optimal values of the cost functional \eqref{cap7_functional}.} \label{cap7_resultados}
\end{center}
\end{table}

Using the optimal solution as reference, some tests were performed,
regarding infected individuals and costs, when no control ($u\equiv0$)
or upper control ($u\equiv1$) is applied.
Table~\ref{cap7_results_no_upper_control} shows the results for \texttt{DOTcvp}
in the three situations. In both scenarios, using the optimal strategy
of vaccination produces better costs with the disease, when compared to not doing anything.
Once there is no control, the number of infected humans is higher and produces a more expensive cost functional.
Figure~\ref{cap7_infected_no_upper_control} shows the number of infected
humans when different controls are considered. It is possible to see that
using the upper control, which means that everyone is vaccinated, implies
that just a few individuals were infected, allowing eradication of the
disease. Although the optimal control, in the sense of objective
\eqref{cap7_functional}, allows the occurrence of an outbreak,
the number of infected individuals is much lower when
compared with a situation where no one is vaccinated.
\begin{table}
\begin{center}
\begin{tabular}{lccc}
\hline
 & Epidemic scenario & Endemic scenario\\
\hline
optimal control & 0.07505791 & 0.00189056\\
no control & 0.32326592 & 0.01045990\\
upper control & 147.82500296 & 116.800000275\\ \hline
\end{tabular}
\caption{Values of the cost functional with optimal control,
no control ($u\equiv0$), and upper control ($u\equiv1$).} \label{cap7_results_no_upper_control}
\end{center}
\end{table}
\begin{figure}[ptbh]
\begin{center}
\subfloat[Epidemic scenario]{\label{cap7_epidemic}
\includegraphics[scale=0.52]{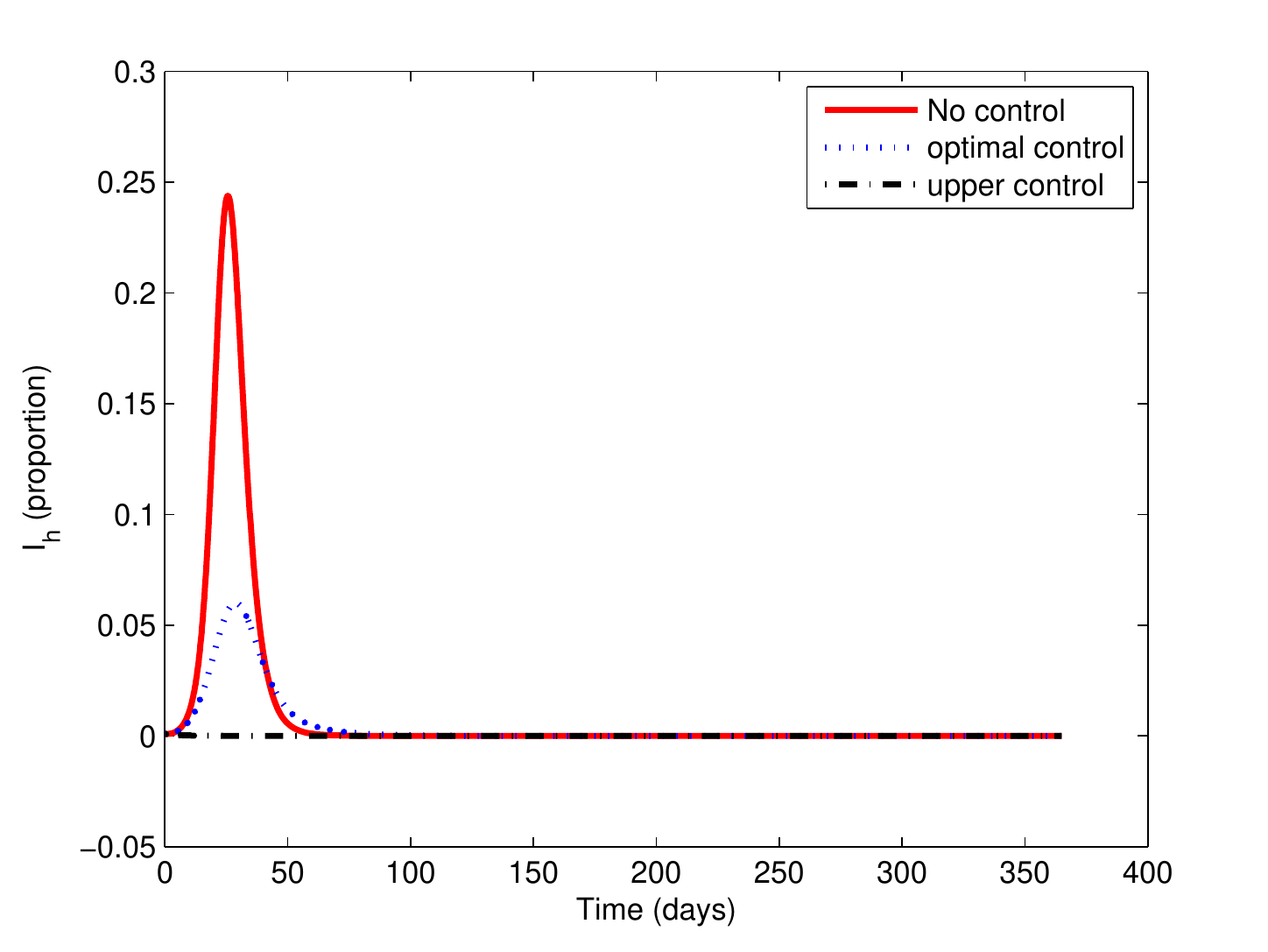}}
\subfloat[Endemic scenario]{\label{cap7_endemic}
\includegraphics[scale=0.52]{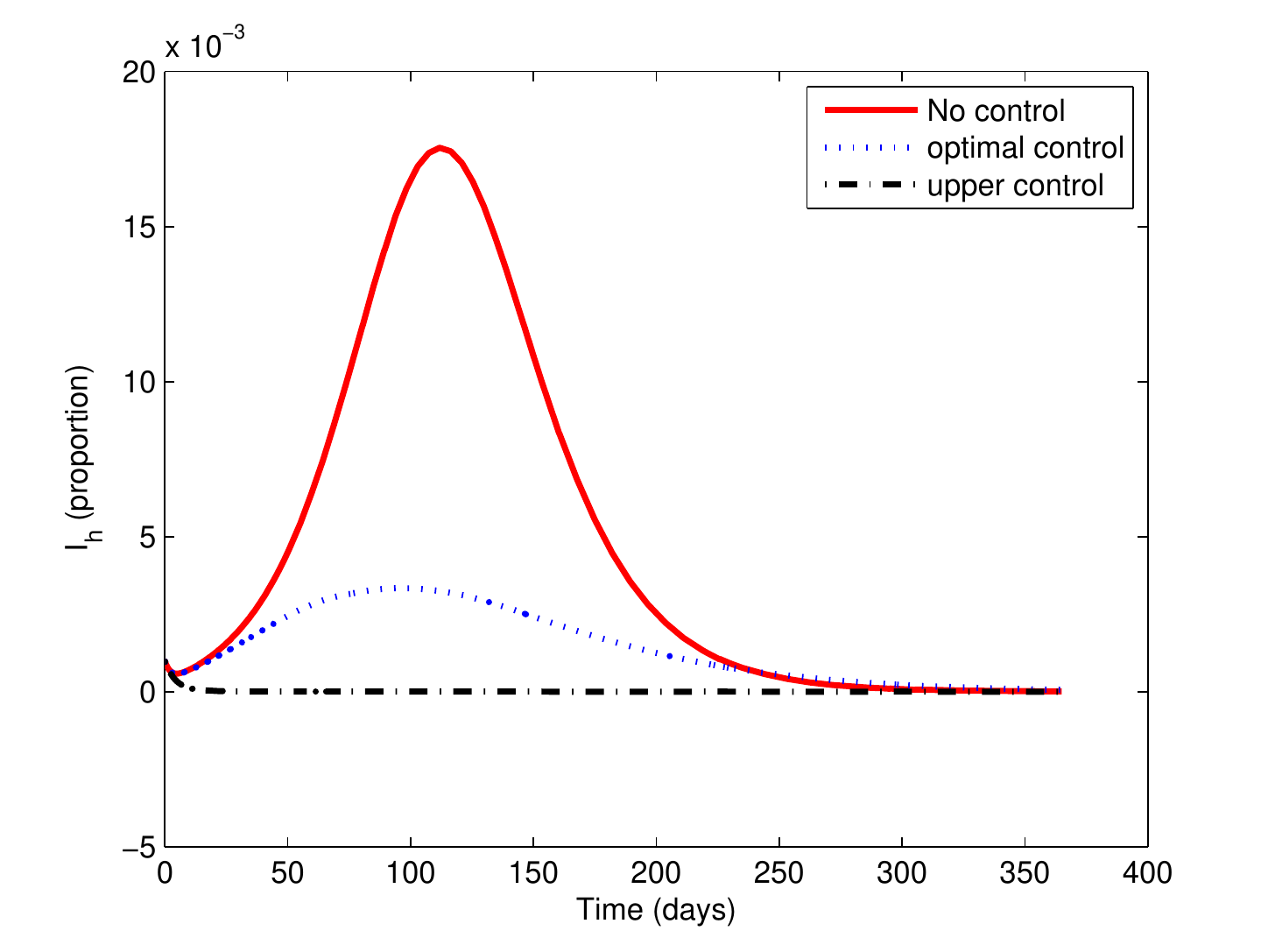}}
\caption{Infected humans when using the optimal vaccination strategy,
in terms of \eqref{cap7_functional}, no vaccination ($u \equiv 0$),
and vaccination of all susceptible population ($u \equiv 1$).}
\label{cap7_infected_no_upper_control}
\end{center}
\end{figure}

We conclude that, assuming a considerable efficacy level of the vaccine,
a vaccination campaign in the susceptible population
can quickly decrease the number of infected people.
Figures~\ref{eficacia_epidemico} and \ref{eficacia_endemic} 
show what happens to the optimal solution,
in the epidemic and endemic scenarios, respectively,
when the efficacy is changed. These figures
confirm an increasing of infected human and control application
with an increasing of the waning immunity rate $\theta$.
\begin{figure}[ptbh]
\begin{center}
\subfloat[Optimal control]{\label{new_controlo_eficacia_epidemico}
\includegraphics[width=0.51\textwidth]{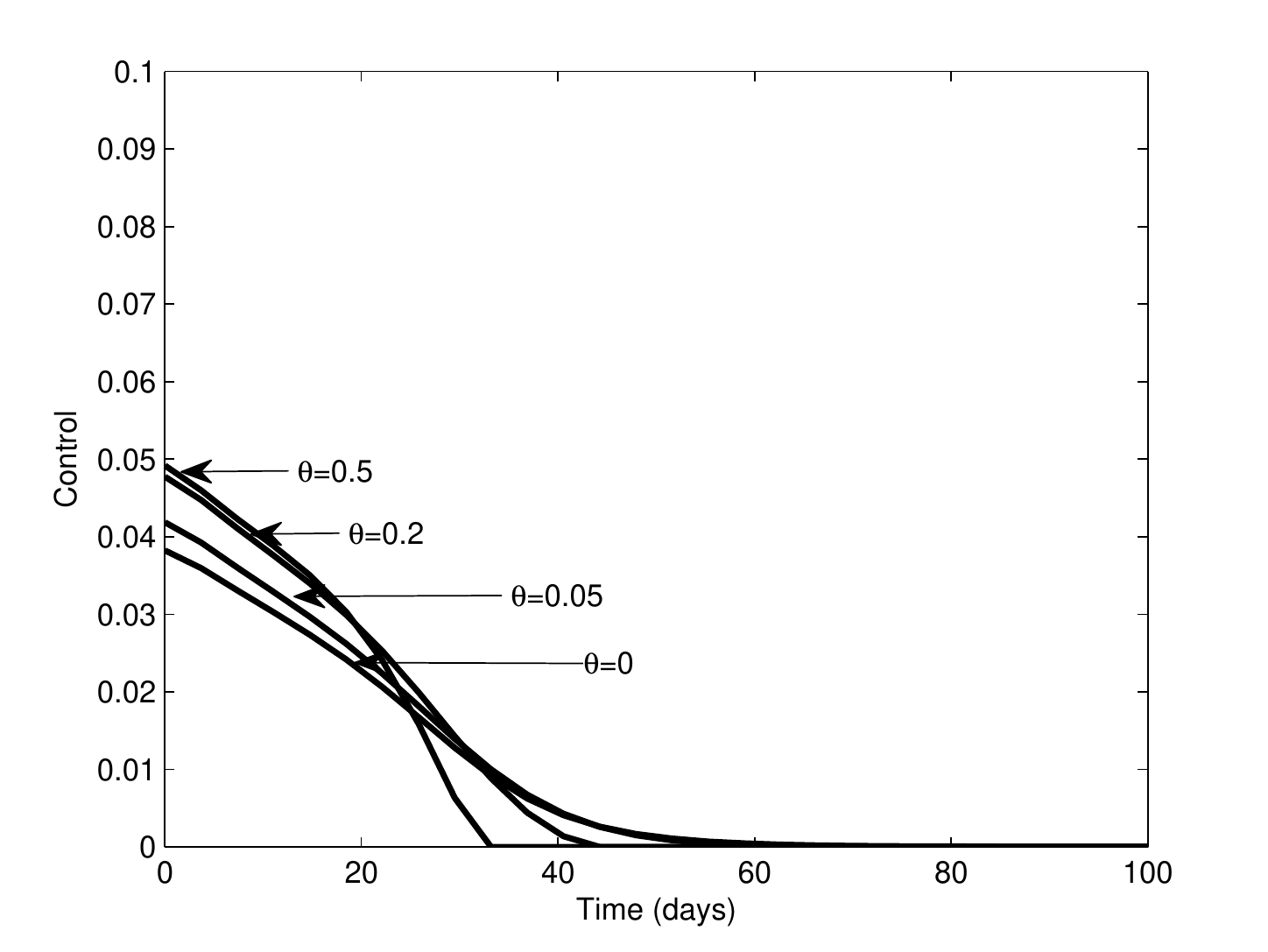}}
\subfloat[Number of infected]{\label{new_infetados_eficacia_epidemico}
\includegraphics[width=0.51\textwidth]{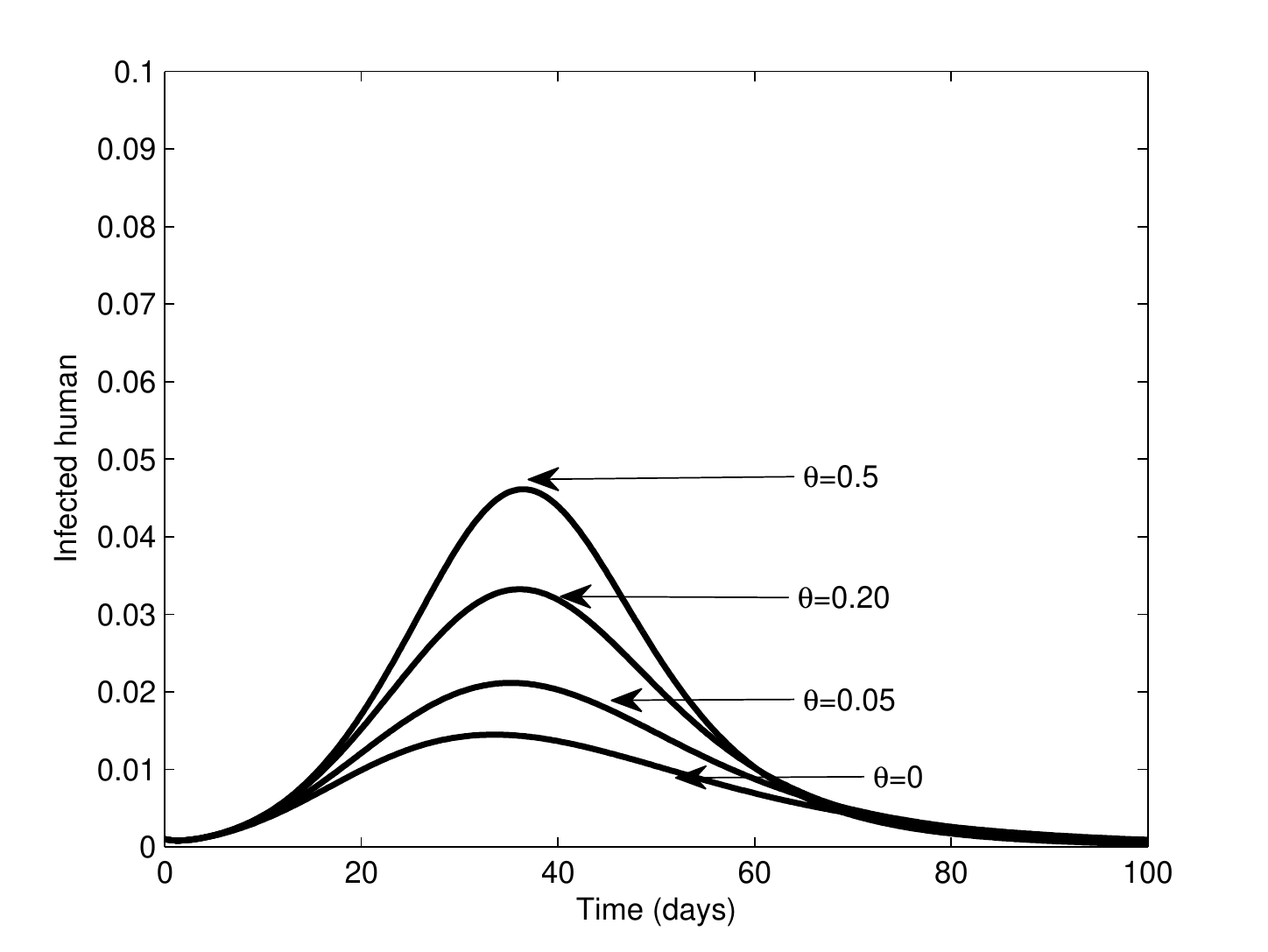}}
\caption{Optimal solutions for different vaccine efficacies (epidemic scenario).}
\label{eficacia_epidemico}
\end{center}
\end{figure}
\begin{figure}[ptbh]
\begin{center}
\subfloat[Optimal control]{\label{new_controlo_eficacia_endemic}
\includegraphics[width=0.51\textwidth]{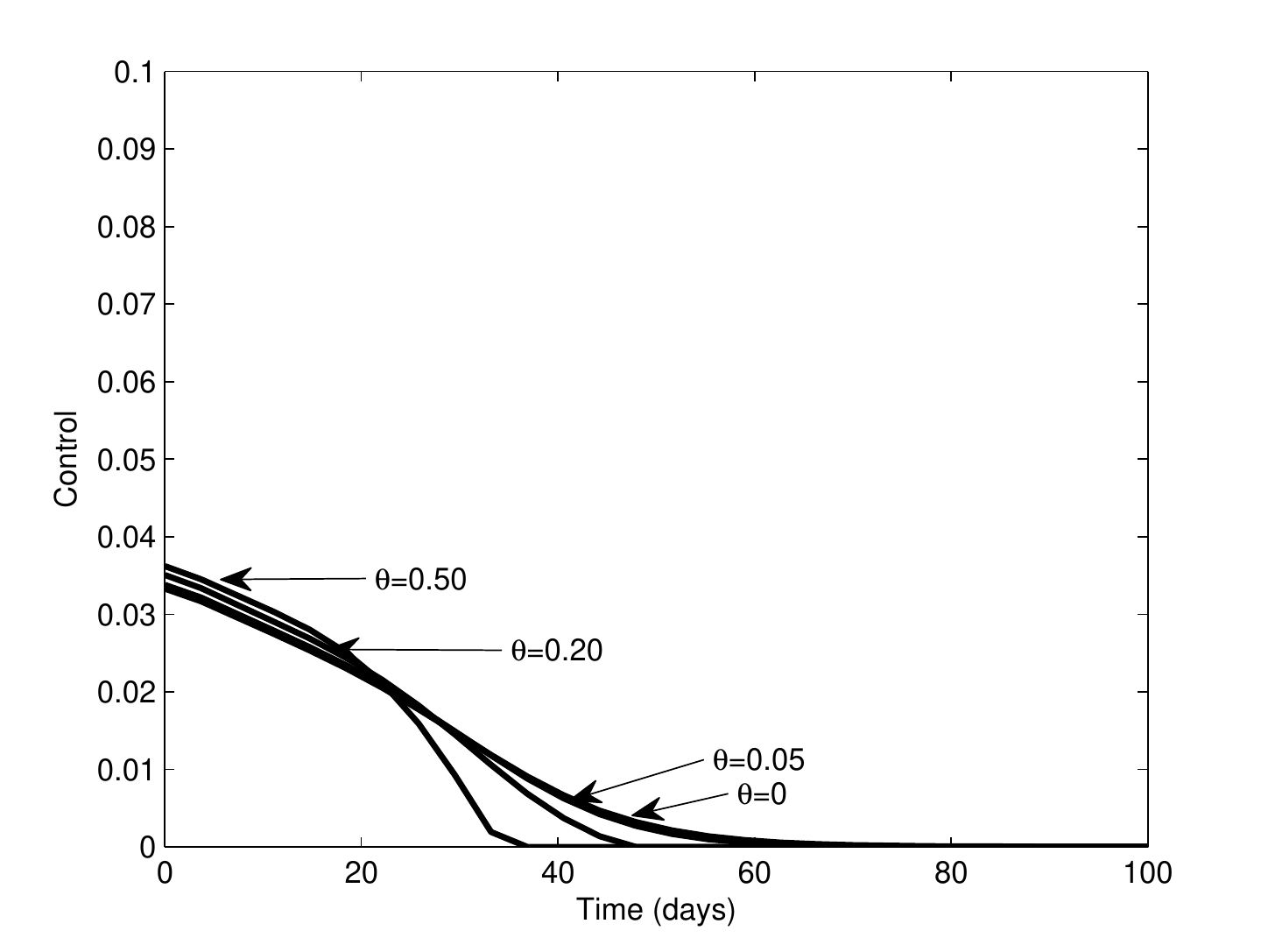}}
\subfloat[Number of infected]{\label{new_infetados_eficacia_endemic}
\includegraphics[width=0.51\textwidth]{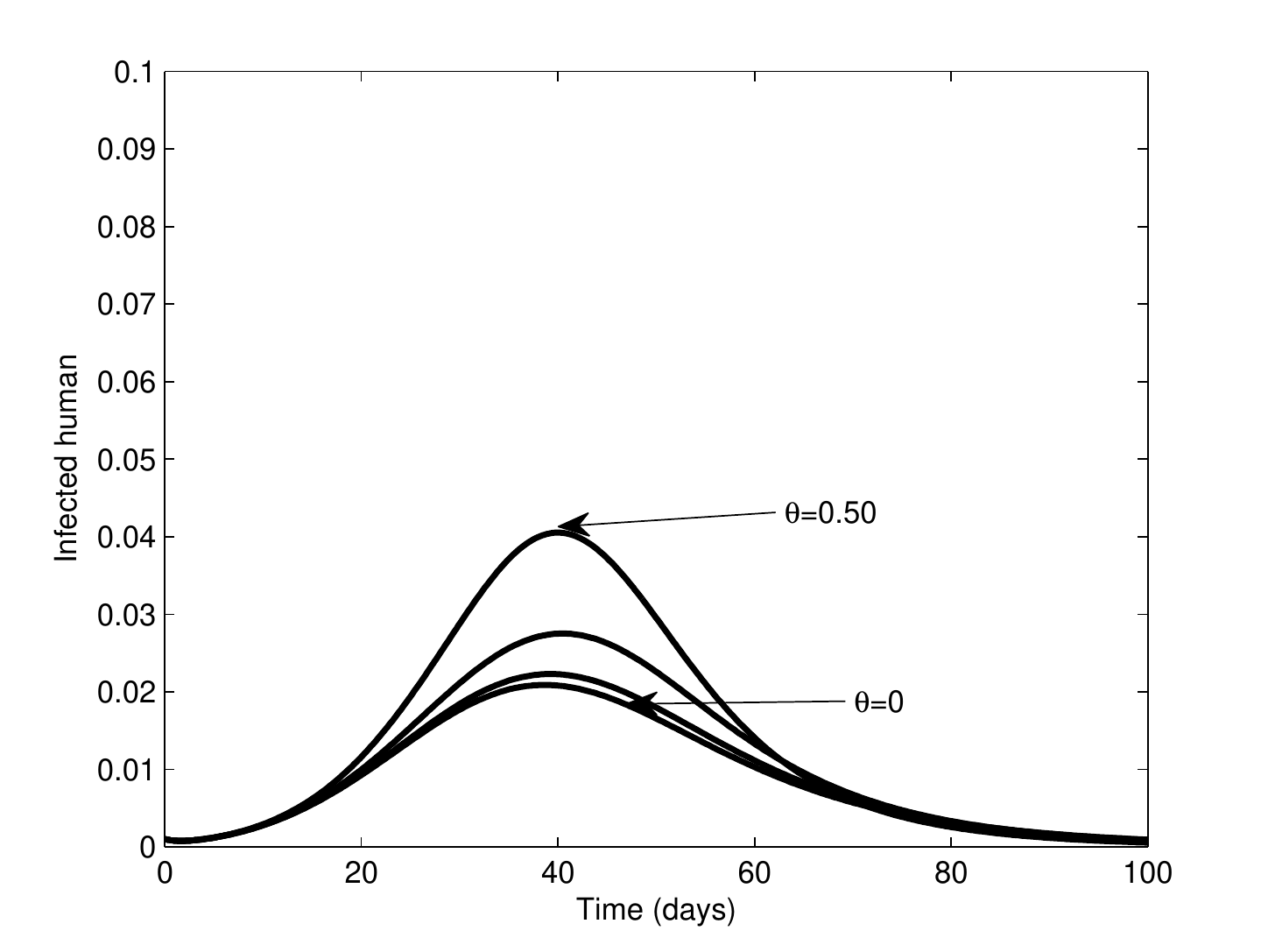}}
\caption{Optimal solutions for different vaccine efficacies (endemic scenario).}
\label{eficacia_endemic}
\end{center}
\end{figure}


\section{Conclusions}

The worldwide expansion of dengue fever is a growing health problem.
In Portugal, on October 2012, for the first time in its history,
an outbreak of dengue on the Madeira island occurred.
Besides \emph{Aedes albipictus}, the other vector that transmits the disease
is already in the Old Continent, namely in Italy and Spain.
Dengue vaccine is an urgent challenge that needs to be overcome.
This may be commercially available within a few years, when the researchers
will find a formula that protect against all four dengue viruses.
A vaccination program is seen as an important measure used
in infectious disease control and immunization and
eradication programs.

In the first part of the paper, different ways
of distributing the vaccine, as well as their features and some of coverage thresholds,
were introduced, in order to cover most of the future vaccine features.
The main goal of a vaccination program is to reduce the prevalence
of an infectious disease and ultimately to eradicate it.
It was shown that eradication success depends on the type of vaccine
as well as on the vaccination coverage. The imperfect vaccines may not
completely prevent infection but could reduce the probability of being infected,
thereby reducing the disease burden. In this study all the simulations were done
using epidemic and endemic scenarios to illustrate distinct realities.
A second analysis was made, using an optimal control approach. The vaccine behaved
as a new disease control variable and, when available, can be a promising
strategy to fight the disease.

Dengue is an infectious tropical disease difficult to prevent and manage.
Researchers agree that the development of a vaccine
for dengue is a question of high priority.
In the present study we have shown how a vaccine results
in reducing morbidity and, simultaneously, in a
reduction of the budget related with the disease.
As future work we intend to study the interaction of a dengue
vaccine with other kinds of controls already investigated in the literature,
such as insecticide and educational campaigns \cite{Sofia2012,Sofia2010c}.
It would be also interesting to investigate what happens if the final size 
of the epidemic is included in the objective functional.


\section*{Acknowledgements}

This work was supported by FEDER funds through
COMPETE --- Operational Programme Factors of Competitiveness
(``Programa Operacional Factores de Competitividade'')
and by Portuguese funds through the
Portuguese Foundation for Science and Technology
(``FCT --- Funda\c{c}\~{a}o para a Ci\^{e}ncia e a Tecnologia''),
within project PEst-C/MAT/UI4106/2011
with COMPETE number FCOMP-01-0124-FEDER-022690.
Rodrigues was also supported by FCT through the PhD grant SFRH/BD/33384/2008,
Monteiro by the R\&D unit Algoritmi and project FCOMP-01-0124-FEDER-022674,
and Torres by the Center for Research
and Development in Mathematics and Applications (CIDMA) and project
PTDC/MAT/113470/2009. The authors are very grateful
to two anonymous referees, for valuable remarks and comments,
which significantly contributed to the quality of the paper.



\end{document}